\newcommand{\CC}{\mathbb C}
\newcommand{\Tt}{\mathcal T}
\newcommand{\Nn}{\mathcal N}
\newcommand{\RR}{\mathbb R}
\newcommand{\HH}{\mathbb H}
\newcommand{\Ee}{\mathcal E}
\newcommand{\Tgn}[1][g,n]{\Tt_{#1}}
\newcommand{\Sgn}[1][g,n]{\Sigma_{#1}}
\newcommand\Mgn[1][g,n]{\mathcal{M}_{#1}}
\DeclareMathOperator{\PSL}{PSL}
\DeclareMathOperator{\WP}{WP}
\DeclareMathOperator{\Teich}{Teich}
\DeclareMathOperator{\Fix}{Fix}
\DeclareMathOperator{\Diff}{Diff}
\newtheorem{theorem}{Theorem}[section]
\DeclareMathOperator{\Mod}{Mod}
\DeclareMathOperator{\PMod}{PMod}
\newtheorem{corollary}[theorem]{Corollary}
\newtheorem{lemma}[theorem]{Lemma}
\newtheorem{proposition}[theorem]{Proposition}
\theoremstyle{definition}
\newtheorem{definition}[theorem]{Definition}
\newtheorem{remark}[theorem]{Remark}
\newtheorem{claim}[theorem]{Claim}
\newtheorem{question}[theorem]{Question}
\crefname{question}{question}{questions}
\title[Local rigidity of covering constructions and WP subvarieties of $\Mgn$]{Local Rigidity of covering constructions and Weil--Petersson subvarieties of the Moduli Space of Curves}
\author{Carlos A. Serv\'an}
\address{Department of Mathematics, University of Chicago}
\email{cmarceloservan@uchicago.edu}
\begin{document}
\begin{abstract} We show that totally geodesic subvarieties of the moduli space $\Mgn$ of genus $g$ curves with
  $n$ marked points,
  endowed with the Weil--Petersson metric, are locally rigid. This implies that
  covering constructions---examples of totally geodesic subvarieties of $\Mgn$ endowed with the Teichm\"uller metric---are locally rigid. We deduce the local rigidity statement
  from a more general rigidity result for a class of
  orbifold maps to $\Mgn$.
\end{abstract}
\maketitle
  \section{Introduction}
  Let $\pi:\Tgn \to \Mgn$
  be the projection from the Teichm\"uller space of a genus $g$ surface with $n$
  marked points to the associated moduli space.
  The Weil--Petersson metric $g_{\WP}$ on $\Tgn$ is a
  K\"ahler metric that descends to a metric on $\Mgn$. The metric $g_{\WP}$
  gives $\Mgn$ the structure of a quasi-projective variety~\cite{Wolpert-qp}.

  A complex submanifold $M \subset \Tgn$ is called \emph{Weil--Petersson} if it is totally
  geodesic with respect to the Weil--Petersson metric $g_{\WP}$. This means
  that the $g_{\WP}$-geodesic between any two points in $M$ is completely contained in $M$.
  Similarly, a subvariety $N \subset \Mgn$ is called \emph{Weil--Petersson} if an
  irreducible component $M$ of $\pi^{-1}(N)$ is a Weil--Petersson complex submanifold of $\Tgn$.
  The complex submanifold $M$ is called a \emph{lift} of $N$.
  More generally, we call a subvariety $N \subset \Mgn$ \emph{almost Weil--Petersson} if it has a lift $M \subset \Tgn$ that maps biholomorphically via a forgetful map $\Tgn \to \Tgn[g,m]$
  onto a Weil--Petersson complex submanifold of some $\Tgn[g,m]$.

  We say that a subvariety $N \subset \Mgn$ is \emph{locally rigid} if any holomorphic deformation
  \[ f_t: N_t \to \Mgn \ \ , \ \ t \in \Delta \]
  with $(N_0,f_0) \cong (N, \iota: N \hookrightarrow \Mgn)$
  through proper immersions $f_t$ and quasi-projective varieties $N_t$ is trivial: there is a holomorphic family of biholomorphisms $g_t:N_t \to N$ inducing
  $(N_t,f_t) \cong (N, \iota)$.
  Our maps
  should be understood in the orbifold sense, i.e. on a level $L\geq 3$ cover of $\Mgn$
  (see
  \Cref{def:proper_def} for details). Our main theorem is the following.
  \begin{theorem}[\textbf{Local Rigidity}]\label{theo:main_rigidity}
    Suppose $3g-3 + n > 0$. Let $N \subset \Mgn$ be an almost Weil--Petersson subvariety
    of positive dimension. Then $N$ is locally rigid.
\end{theorem}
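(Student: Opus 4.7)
The plan is to reduce the theorem to a rigidity statement for equivariant holomorphic maps into Teichm\"uller space and then to exploit the negative curvature of $g_{\WP}$. The broader statement to aim for is: any holomorphic deformation through proper immersions of a proper orbifold holomorphic immersion $\iota: N \to \Mgn$ whose lift factors through a Weil--Petersson submanifold (possibly after a forgetful projection) is trivial up to reparametrizations of the source. Theorem~\ref{theo:main_rigidity} then follows by specializing this to $\iota$ the inclusion of $N$.

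First I would pass to a level $L \geq 3$ cover $\widetilde{\Mgn}$ to convert orbifold maps into ordinary holomorphic maps. Using properness of $f_t$ and Ehresmann's fibration theorem, after shrinking $\Delta$ there is a $C^\infty$-trivialization of the family $\{N_t\}$, so the deformation becomes a family $\widetilde f_t : \widetilde N_0 \to \widetilde{\Mgn}$ with $\widetilde f_0$ a proper immersion. Post-composing with the forgetful map provided by the almost Weil--Petersson hypothesis, we may assume that the lift of the image is an honest Weil--Petersson totally geodesic submanifold $M \subset \Tgn[g,m]$. Lifting $\widetilde f_t$ to the universal cover of $\widetilde N_0$ produces a family of $\rho_t$-equivariant holomorphic maps $\widehat f_t : \widehat N \to \Tgn[g,m]$ for representations $\rho_t : \pi_1(N) \to \Mod(\Sgn[g,m])$; since $\Mod(\Sgn[g,m])$ is discrete, continuity forces $\rho_t \equiv \rho_0$ after shrinking $\Delta$. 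The first-order variation $V := \partial_t \widehat f_t|_{t=0}$ is then a $\rho_0$-equivariant holomorphic section of $\widehat f_0^* T\Tgn[g,m]$, and $\widehat f_0$ is a universal cover onto $M$.

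The central step is to show that $V$ is everywhere tangent to $M$ and descends to an infinitesimal automorphism of $N$. Using the WP-parallel splitting $T\Tgn[g,m]|_M = TM \oplus \nu_M$ coming from total geodesy, I decompose $V = V^\parallel + V^\perp$. The normal component $V^\perp$ satisfies a Jacobi-type equation along $\widehat f_0$, and combining Wolpert--Tromba negative sectional curvature of $g_{\WP}$ with a Siu--Sampson strong rigidity/Bochner estimate should force any equivariant holomorphic Jacobi field normal to a WP-totally geodesic submanifold with finite WP-energy to vanish. Once $V^\perp \equiv 0$, the tangential part $V^\parallel$ is an equivariant holomorphic vector field on $M$ and integrates, on a possibly smaller disc in $\Delta$, to a family of biholomorphisms $g_t : N_t \to N$ trivializing the deformation in the sense of the statement.

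The hardest point, I expect, is verifying the $L^2_{\WP}$-integrability of $V^\perp$: because $g_{\WP}$ is incomplete and $N$ is noncompact, a priori the relevant energies diverge. To control this I would use Wolpert's asymptotic expansions of $g_{\WP}$ near the boundary strata of $\overline{\Mgn}$ together with properness of $f_t$---which forces $\widehat f_t$ to send the boundary of $\overline{N_t}$ into $\overline{\Mgn} \setminus \Mgn$---to derive decay estimates on $V$ in collar neighbourhoods of the boundary. Converting this boundary decay into global integrability and then into vanishing via the Bochner technique is where the deformation-through-proper-immersions hypothesis and the quasi-projective structure of $N_t$ become essential, and it is the technical heart on which the entire argument hinges.
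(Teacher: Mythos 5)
There is a genuine gap at the step you yourself flag as the ``technical heart.'' Your argument hinges on the claim that the normal component $V^{\perp}$ of the first-order variation vanishes, via an asserted principle that ``any equivariant holomorphic Jacobi field normal to a WP-totally geodesic submanifold with finite WP-energy must vanish,'' to be extracted from negative curvature plus a Siu--Sampson/Bochner estimate. This is not a known statement you can quote, and nothing in the proposal establishes it: it is not clear that $V^{\perp}$ satisfies the Jacobi-type equation you want (the variation of a family of holomorphic maps satisfies the linearization of the harmonic map equation only after setting up pluriharmonicity carefully, and the relevant Bochner identity must then be integrated), and the integration is exactly where incompleteness of $g_{\WP}$, noncompactness of $N$, and the lack of a priori $L^2_{\WP}$-bounds bite. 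You propose to fix this with Wolpert's boundary expansions and decay estimates in the collars, but no such estimates are derived, and it is not evident they hold for an arbitrary proper deformation. In addition, even granting $V^{\perp}\equiv 0$ at $t=0$, this is only first-order information; to trivialize the deformation you would need the analogous vanishing at every $t$ (or to all orders), together with an argument that the resulting tangential fields integrate to biholomorphisms onto $N$ --- integrating an equivariant vector field on a noncompact, WP-incomplete quotient is itself not automatic.

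The paper avoids all of this by working globally rather than infinitesimally. Since $M$ is totally geodesic and $g_{\WP}$ is negatively curved and geodesically convex, there is a $\Gamma_M$-equivariant deformation retraction $\pi^t_M$ of $\Tgn$ onto $M$ that is \emph{norm non-increasing} on tangent vectors (\Cref{lemma:projection}). Following the Imayoshi--Shiga argument of \cite{AAS}, one shows that for an equivariant holomorphic map from a finite type curve the energy of the retracted family cannot drop (a Stokes/exhaustion argument using the K\"ahler form $\omega_{\WP}$ and the Wirtinger inequality), so the map already lands in $M$; the general quasi-projective source is then handled by covering it with curves (\Cref{lemma:crap}). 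This yields the factorization theorem (\Cref{theo:theorem1}): \emph{every} $\Gamma_M$-deformation factors through $M$, hence $F_t(N_t[L])\subset N[L]$ for all $t$, not just to first order; a covering-degree argument then trivializes the family. The noncompactness issue you identify is real, but it is resolved there by the one-dimensional Stokes argument on curves rather than by global $L^2$ Bochner estimates, which is why the paper's route closes while yours, as written, does not.
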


\noindent \textbf{Examples: Covering constructions.} A source of examples of almost Weil--Petersson subvarieties
are given by \emph{covering constructions}~\cite{BS,MMW}, defined
as follows. Let
\[ h:\Sigma_{g',n'} \to \Sgn \] be
a finite degree, orientation-preserving, topological branched cover. Assume further that
the preimage of the marked points in $\Sgn$ equals the union of the marked points in $\Sgn[g',n']$ and
the ramification points of $h$. Pulling back complex structures under $h$ gives a holomorphic map
\[ f_h:\Tgn \to \Tgn[g',\ell] \]
for some $\ell \geq n'$. We call $f_h$ a \emph{totally marked} covering construction (see \Cref{section:covering_geodesic} for more details). A \emph{covering construction}
is a holomorphic map
\[ f: \Tgn \to \Tgn[g',n']\] given by the composition of a totally
marked covering construction $f_h:\Tgn \to \Tgn[g',\ell]$ with a forgetful map
$\Tgn[g',\ell] \to \Tgn[g',n']$ which forgets \emph{only} ramification points of $h$.
If the map $h$ is a regular branched cover, we call $f$ a \emph{regular covering construction}. The link between covering constructions
and almost totally geodesic submanifolds of $\Tgn$ is the following.
\begin{proposition}\label{lemma:almost_geodesic}
 Assume $3g-3 + n >0$. Let $f:\Tgn \to \Tgn[g',n']$ be a covering construction. The following holds:
  \begin{enumerate}
  \item Assume $f$ is totally marked. Then, the image $f(\Tgn)$  is a Weil--Petersson complex submanifold of $\Tgn[g',n']$.
  \item Assume $f$ is regular and not totally marked.
    If $(g,g') = (0,1)$ assume further that the marked points satisfy $n \geq 5$. Then, the image $f(\Tgn)$ is an almost Weil--Petersson complex
    submanifold of $\Tgn[g',n']$.
  \end{enumerate}
    In particular, in either case the projection $\pi(f(\Tgn)) \subset \Mgn[g',n']$ is an almost Weil--Petersson subvariety.
\end{proposition}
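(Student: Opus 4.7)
The plan is to realize the image of a totally marked covering construction as a fixed locus of a finite group of mapping classes acting on a Teichm\"uller space by Weil--Petersson isometries; such fixed loci are totally geodesic. This directly handles the regular case. For general (non-Galois) $h$, I would reduce to the regular case by passing to a Galois closure. Part (2) then follows by factoring $f = p \circ f_h$ with $f_h$ totally marked and $p$ a forgetful map, and by showing that the restriction $p|_{f_h(\Tgn)}$ is a biholomorphism.

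\textbf{Proof of (1).} Take a Galois closure $\tilde h: \tilde \Sigma \to \Sgn$ of $h$ with deck group $G$, and let $K \leq G$ be the subgroup such that $\Sgn[g',n'] = \tilde \Sigma / K$; then $\tilde h = h \circ h'$ with $h'$ Galois of group $K$. Equipping $\tilde \Sigma$ with marked points $\tilde h^{-1}(\text{marked points of } \Sgn)$ makes both $f_{\tilde h}: \Tgn \to \Tgn[\tilde g, \tilde \ell]$ and $f_{h'}: \Tgn[g', \ell] \to \Tgn[\tilde g, \tilde \ell]$ into totally marked covering constructions, factoring $f_{\tilde h} = f_{h'} \circ f_h$. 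Since $\tilde h, h'$ are Galois, the images are the fixed loci $\Fix(G) \subset \Fix(K) \subset \Tgn[\tilde g, \tilde \ell]$ of the mapping-class-group actions of $G$ and $K$, which act by WP-isometries; fixed loci of finite isometry groups on Riemannian manifolds are totally geodesic, so both are WP-totally geodesic. A computation with Petersson inner products shows that $f_{h'}$ is a WP-homothety (scaling factor $\sqrt{|K|}$) onto $\Fix(K)$: the pullback $h'^*: Q(X) \to Q(h'^*X)$ of quadratic differentials multiplies the pairing by $|K|$. Since $\Fix(G) \subset \Fix(K)$ is totally geodesic, its preimage under $f_{h'}$ --- which equals $f_h(\Tgn)$ --- is WP-totally geodesic in $\Tgn[g',\ell]$.

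\textbf{Proof of (2).} Here $h$ is Galois, so the regular case of (1) gives $f_h(\Tgn) = \Fix(G) \subset \Tgn[g',\ell]$ as a WP-totally geodesic submanifold. Write $f = p \circ f_h$ with $p: \Tgn[g',\ell] \to \Tgn[g',n']$ the forgetful map on ramification points. I would then show that $p$ restricts to a biholomorphism $f_h(\Tgn) \to f(\Tgn)$. For injectivity: if $f(X_1) = f(X_2)$, a witnessing biholomorphism $\psi: h^*X_1 \to h^*X_2$ isotopic to the identity on $\Sgn[g',n']$ (but possibly permuting ramification points) must, generically, be $G$-equivariant --- conjugation by an identity-isotopic map fixes isotopy classes of deck transformations --- and so descends to an identity-isotopic biholomorphism $X_1 \to X_2$, forcing $X_1 = X_2$. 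The hypothesis $n \geq 5$ for $(g,g') = (0,1)$ excludes the hyperelliptic setting where too few non-branch marked points exist to rigidify the deck involution, because the hyperelliptic involution acts automorphically on every elliptic curve. Dimension counting via quadratic differentials gives that $p|_{f_h(\Tgn)}$ is immersive, hence a biholomorphism onto its image. This identifies $f(\Tgn)$ biholomorphically with the WP submanifold $f_h(\Tgn)$ via the forgetful $p$, yielding the almost-WP property.

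\textbf{Main obstacle.} The technical heart is the WP-homothety claim for $f_{h'}$ in part (1): a local calculation establishing that pullback of quadratic differentials under a Galois cover scales the Petersson pairing by the degree. The second nontrivial point is the injectivity/equivariance argument in part (2): I would need to carefully formalize that biholomorphisms between generic pullback surfaces are automatically $G$-equivariant, and to verify that the failure for $(g,g') = (0,1)$ with $n \leq 4$ is exactly the presence of ``too many'' automorphisms intrinsic to every elliptic curve.
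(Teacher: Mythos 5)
Your part (1) is essentially the paper's own argument: pass to the Galois (normal) closure, identify the images of the regular totally marked constructions with fixed loci of finite groups acting by Weil--Petersson isometries (hence totally geodesic), show that a totally marked construction rescales $g_{\WP}$ by the degree via pullback of quadratic differentials, and pull the totally geodesic locus back under this homothety. One soft spot: you assert the equality of the image with $\Fix(G)$, resp.\ $\Fix(K)$. The easy inclusion (image inside the fixed locus) is not enough, since you need the images of $f_{\tilde h}$ and of $f_{h'}$ themselves to be totally geodesic before preimages behave well; the reverse inclusion is a known fact which the paper reproves via the totally geodesic bundles $QM$, so cite it rather than leave it implicit. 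With that, part (1) is sound and parallel to the paper.

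Part (2) has a genuine gap: what you prove is not the statement as defined. By \Cref{def:almost_totally_geodesic}, $f(\Tgn)\subset\Tgn[g',n']$ is almost Weil--Petersson only if there is a forgetful map $\mathcal F:\Tgn[g',n']\to\Tgn[g',m]$ with $m\le n'$ and $3g'-3+m>0$ which restricts to a biholomorphism of $f(\Tgn)$ onto a Weil--Petersson submanifold of $\Tgn[g',m]$; the marked points must be forgotten \emph{from the ambient space of the image}. You instead identify $f(\Tgn)$, via the forgetful map $p:\Tgn[g',\ell]\to\Tgn[g',n']$ going the other way, with the Weil--Petersson submanifold $f_h(\Tgn)$ sitting in the \emph{larger} space $\Tgn[g',\ell]$. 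Even if every claim in your sketch were verified (and the injectivity of $p$ on $f_h(\Tgn)$ is already standard: it is the injectivity of $f$ itself, which the paper quotes from the appendix of the Bonnafoux--Serv\'an reference in its definition of covering constructions), this does not produce the required forgetful map out of $\Tgn[g',n']$, and the direction is not cosmetic: the later rigidity argument composes classifying maps with $\mathcal F$ and uses injectivity of $\mathcal F_*$ on $\Gamma_M$.

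The paper's route for (2) is to forget the ramification points of $h$ that remain among the $n'$ marked points, so the new marked set $v'=v\setminus R(h)=h^{-1}(u\setminus B(h))$ is invariant under the deck group $H$; the image in $\Tt(\Sigma',v')$ is then $\Fix(H)$, hence Weil--Petersson, and the forgetful map is a biholomorphism on the image precisely because it only forgets ramification points. This also exposes the true role of the hypothesis $n\ge 5$ when $(g,g')=(0,1)$: a regular cover of the sphere by a torus has at most $4$ branch points, so $n\ge 5$ guarantees $|v'|\ge 1$ and hence $3g'-3+|v'|>0$, i.e.\ the target of the forgetful map is a genuine Teichm\"uller space in the stable range. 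Your explanation of $n\ge 5$ in terms of the elliptic involution obstructing an equivariance/injectivity argument misidentifies where the hypothesis enters.
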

\begin{remark}
  To our knowledge, covering constructions provide the only known examples of almost Weil--Petersson subvarieties of $\Mgn$. Thus, we ask the following.
  \begin{question}\label{question:wp}
    Does there exist a Weil--Petersson subvariety $N \subset \Mgn$ which is not given by a covering construction?
  \end{question}
  
\end{remark}
We now give two applications of \Cref{theo:main_rigidity}.
\bigskip

\noindent \textbf{Application 1. Totally geodesic subvarieties for the Teichm\"uller metric.}
Replacing the Weil--Petersson metric by the \emph{Teichm\"uller metric}
$d_{\Teich}$ in our definition of Weil--Petersson subvarieties gives \emph{Teichm\"uller} subvarieties $N \subset \Mgn$ and
\emph{Teichm\"uller} complex submanifolds $M \subset \Tgn$~\cite{Wright,Wright-AH,BDR}.

  $1$-dimensional Teichm\"uller subvarieties are called \emph{Teichm\"uller curves}.
  McMullen showed~\cite{Mcmullen-Rigid} that Teichm\"uller curves are rigid.\footnote{McMullen considers a more general class of local deformations, not assuming that $f_t$ is a proper immersion. For the one-dimensional case, our proof of \Cref{theo:main_rigidity} only requires $f_t$ to be an immersion, and so it gives
    an alternative proof of McMullen's result for the case of covering constructions.}
  Arana-Herrera--Wright~\cite{Wright-AH}*{Question 10.6} ask if the same holds for Teichm\"uller varieties of
  higher dimensions.
  The projection of images of covering constructions are Teichm\"uller subvarieties of $\Mgn$ and
  so \Cref{theo:main_rigidity} implies a positive answer to~\cite{Wright-AH}*{Question 10.6} in this case. 
  \begin{corollary}[\textbf{Local rigidity of Covering constructions}]\label{cor:covering_rigidity} Assume $3g-3 + n >0$. Let $f:\Tgn \to \Tgn[g',n']$ be a totally marked or regular covering construction,
    satisfying the same assumptions of \Cref{lemma:almost_geodesic}.
    The projection $\pi(f(\Tgn)) \subset \Mgn[g',n']$ is locally rigid.
  \end{corollary}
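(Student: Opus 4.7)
The plan is to obtain this corollary as an immediate consequence of \Cref{theo:main_rigidity}, using \Cref{lemma:almost_geodesic} to supply the almost Weil--Petersson structure. Set $N := \pi(f(\Tgn)) \subset \Mgn[g',n']$. Under the hypotheses on $f$ (either $f$ is totally marked, or $f$ is regular and not totally marked with the additional condition $n \geq 5$ when $(g,g') = (0,1)$), the concluding sentence of \Cref{lemma:almost_geodesic} asserts precisely that $N$ is an almost Weil--Petersson subvariety of $\Mgn[g',n']$.

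To apply \Cref{theo:main_rigidity} it remains to check the positive--dimension condition. The covering construction $f$ factors as $\Tgn \xrightarrow{f_h} \Tgn[g',\ell] \to \Tgn[g',n']$, where $f_h$ is a totally marked covering construction and the second map is a forgetful map dropping only ramification points of $h$. The first factor is a holomorphic embedding (a standard fact for covering constructions, since pulling back complex structures is injective on Teichm\"uller space), and the second restricted to $f_h(\Tgn)$ is biholomorphic onto its image, since the forgotten marked points are canonically determined along the image by the topological data of the cover $h$. Hence $\dim f(\Tgn) = \dim \Tgn = 3g - 3 + n$, which is positive by assumption. Because the orbifold projection $\pi$ is a finite quotient, $\dim N = \dim f(\Tgn) > 0$.

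With both hypotheses of \Cref{theo:main_rigidity} verified, the main theorem yields local rigidity of $N$, proving the corollary. The entire content of the corollary lies in \Cref{theo:main_rigidity} together with \Cref{lemma:almost_geodesic}; the deduction itself is formal and presents no substantive obstacle.
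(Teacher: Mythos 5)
Your proposal is correct and follows essentially the same route as the paper: the corollary is deduced directly from \Cref{theo:main_rigidity} once \Cref{lemma:almost_geodesic} (whose concluding sentence already packages both the subvariety property, via Filip's result, and the almost Weil--Petersson structure) is invoked, with positive dimension being immediate since the covering construction is an isometric embedding and $3g-3+n>0$. The only quibble is the phrase ``$\pi$ is a finite quotient'' --- the quotient is by the infinite group $\PMod$, though the action is properly discontinuous so dimension is indeed preserved --- but this does not affect the argument.
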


\begin{remark}
  Any covering construction arises by applying forgetful maps to a totally marked covering construction.
  Thus, up to forgetful maps, all covering constructions are locally rigid.
\end{remark}

\begin{remark} Covering constructions encompass all of the known
  examples of Teichm\"uller subvarieties of $\Mgn$ of dimension bigger than $1$,
  except for the $2$-dimensional examples $\{Y_i\}$ found recently by McMullen, Mukamel, Wright and
Eskin~\cite{MMW,EMMW}. In this regard, the following
  questions are quite pertinent.
  \begin{question}\label{question:question1}
    Are the Teichm\"uller surfaces $Y_i$ totally geodesic with respect to the Weil--Petersson metric, i.e. are $Y_i$ also Weil--Petersson surfaces?
  \end{question}
  \begin{question}\label{question:question2}
    Given a subvariety $N \subset \Mgn$. Suppose that $N$ is totally geodesic with respect to
    \emph{both} the Teichm\"uller and the Weil--Petersson metric. Is $N$ given by a covering construction?
  \end{question}

\end{remark}

\noindent \textbf{Application 2. Characterization of almost Weil--Petersson submanifolds.}
The pure mapping class group
$\PMod(\Sgn):=\pi_0(\Diff^+(\Sgn,\{x_1,\ldots,x_n\}))$ acts on $\Tgn$ by biholomorphisms. For an arbitrary
analytic subset $W \subset \Tgn$, let $\Gamma_W$ be the stabilizer of $W$ in
$\PMod(\Sgn)$.
Let $M$ be an almost Weil--Petersson complex submanifold of $\Tgn$ of positive dimension.
As a corollary of the proof of \Cref{theo:main_rigidity} (see \Cref{theo:theorem1}), we obtain
the following characterization of $M$.
\begin{corollary}[\textbf{Maximality}]\label{corollary:characterization}
  Let $3g-3 + n > 0$. Assume that $\pi(M) \subset \Mgn$ is a subvariety. Then, the submanifold
  $M$ is maximal among
  the analytic subsets $W$ of $\Tgn$
  such that the following holds:
  \begin{enumerate}
  \item $\Gamma_W \subseteq \Gamma_M.$
  \item $W/\Gamma_W$ is a quasi-projective variety.
  \end{enumerate}
\end{corollary}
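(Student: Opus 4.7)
The plan is to argue by contradiction via \Cref{theo:main_rigidity}, likely invoking the stronger intermediate \Cref{theo:theorem1}. Suppose for contradiction that an analytic subset $W\subset\Tgn$ satisfies $W\supsetneq M$, $\Gamma_W\subseteq\Gamma_M$, and $W/\Gamma_W$ quasi-projective. The aim is to use the extra dimensions of $W$ beyond $M$ to construct a nontrivial holomorphic deformation $\{f_t:N_t\to\Mgn\}_{t\in\Delta}$ of the inclusion $\iota:\pi(M)\hookrightarrow\Mgn$ through proper immersions whose images lie in $\pi(W)$, contradicting local rigidity.

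First I would reduce to the case where $W$ is irreducible and $\dim_{\mathbb{C}}W>\dim_{\mathbb{C}}M$. An irreducible component of $W$ containing $M$ either coincides with $M$, in which case it can be excised without destroying the hypotheses, or has strictly larger dimension; at least one component of the second kind must occur since $W\supsetneq M$. Descending via the natural map $W/\Gamma_W\to\pi(W)$, which is birational onto its image because for generic $x_0\in W$ the orbit $\PMod\cdot x_0$ meets $W$ in precisely $\Gamma_W\cdot x_0$, identifies $\pi(W)\subset\Mgn$ as a quasi-projective subvariety strictly containing $\pi(M)$: equality $\pi(W)=\pi(M)$ would force the irreducible $W$ to sit inside the countable union $\PMod\cdot M$ of analytic subsets of strictly smaller dimension, impossible by a Baire-category argument.

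The heart of the proof is producing a nontrivial holomorphic family of proper immersions deforming $\iota$ inside $\pi(W)$. At a smooth point $p\in M$ lying in the smooth locus of $W$, choose local holomorphic coordinates identifying $W$ with $\Delta^k\times\Delta^\ell$ and $M$ with $\Delta^k\times\{0\}$, where $\ell=\dim_{\mathbb{C}}W-\dim_{\mathbb{C}}M>0$. The local translates $\Delta^k\times\{c\}$, for $c\in\Delta^\ell$, yield a holomorphic family of biholomorphic copies of a neighborhood of $p$ in $M$ contained in $W$. Projecting to $\pi(W)$ exhibits a nonzero local section of the normal sheaf $\mathcal{N}_{\pi(M)/\pi(W)}$, hence a nonzero tangent vector at $[\pi(M)]$ in the Douady space of closed analytic subsets of $\pi(W)$. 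Integrating this tangent direction, using the algebraicity of $\pi(W)$, produces the sought-after family $\{f_t\}$ with pairwise distinct images $\pi(M_t)\subset\pi(W)$ near $\pi(p)$. Then \Cref{theo:main_rigidity} forces $\pi(M_t)=\pi(M)$ for all $t$, contradicting the distinctness of the local translates.

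The main obstacle is the globalization step, i.e. passing from the nonzero infinitesimal normal section to a genuine holomorphic family of proper immersions into $\Mgn$: obstructions live in $H^1(\pi(M),\mathcal{N}_{\pi(M)/\pi(W)})$ and need not vanish \emph{a priori}. The cleanest way around this is probably to replace the appeal to \Cref{theo:main_rigidity} by the infinitesimal rigidity supplied by \Cref{theo:theorem1}: if that theorem rules out nonzero first-order deformations of $\iota$ in $\Mgn$, then the mere existence of the tangent vector constructed above already contradicts it, and integration to a full one-parameter family becomes unnecessary. This would explain why the corollary is advertised as a consequence of the \emph{proof} of \Cref{theo:main_rigidity} rather than of its bare statement.
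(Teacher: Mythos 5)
Your strategy has a genuine gap, and it is the one you flag yourself but do not repair. To contradict \Cref{theo:main_rigidity} you need an actual holomorphic family $f_t\colon N_t\to\Mgn$ of \emph{proper immersions of quasi-projective varieties} deforming $\iota\colon\pi(M)\hookrightarrow\Mgn$; your local translates $\Delta^k\times\{c\}$ give at best a section of a normal sheaf near one point, and you concede the globalization is obstructed. The proposed fallback, that \Cref{theo:theorem1} supplies \emph{infinitesimal} rigidity ruling out nonzero first-order deformations, is not something that theorem asserts: it is a statement about honest holomorphic maps from algebraic orbifolds to $\Mgn$ (proved by an energy/Stokes argument), and nothing in the paper controls sections of $\mathcal N_{\pi(M)/\pi(W)}$ or tangent vectors to a Douady space. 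Several intermediate claims also need justification: for a general analytic subset $W\subset\Tgn$ the image $\pi(W)$ need not be a subvariety at all (a generic Teichm\"uller disk is a closed analytic subset whose projection is dense in $\Mgn$), so "$\pi(W)$ is quasi-projective" and "integrate using algebraicity of $\pi(W)$" are not available without further argument; and excising components of $W$ equal to $M$ does not obviously preserve the hypothesis $\Gamma_W\subseteq\Gamma_M$ or quasi-projectivity of the quotient.

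The intended argument is much more direct and uses the hypotheses exactly where they are designed to enter. Conditions (1) and (2) say precisely that the orbifold map $\phi\colon W/\Gamma_W\to\Mgn$ induced by the inclusion $\iota_W\colon W\hookrightarrow\Tgn$ is a nonconstant $\Gamma_M$-deformation of an \emph{algebraic orbifold}: take $\Gamma_W[L]=\Gamma_W\cap\PMod(\Sgn)[L]$ as the finite-index subgroup acting freely, and note that $\phi_*$ is the inclusion $\Gamma_W\hookrightarrow\PMod(\Sgn)$, which factors through $\Gamma_M$ by (1), while (2) gives algebraicity. \Cref{theo:theorem1} then produces a lift of $\phi$ factoring through $M\hookrightarrow\Tgn$; since any lift agrees on each component of $W$ with $g\circ\iota_W$ for some $g\in\PMod(\Sgn)$, one gets $g(W)\subseteq M$, and together with $M\subseteq W$ a dimension/irreducibility argument yields $W=M$. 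No deformation family, properness of $f_t$, or Douady-space input is needed; in particular the quasi-projectivity of $W/\Gamma_W$ is used only through the definition of a $\Gamma_M$-deformation, which your route never invokes.
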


\begin{remark}
  \Cref{corollary:characterization} showcases the following rigidity
  result for the pair $(M,\Gamma_M)$. Inclusion at the level of the stabilizers $\Gamma_W < \Gamma_M$ implies
  the inclusion of \emph{spaces} $W \subset M$. A similar
  statement holds for families of curves over quasi-projective varieties: if the monodromy factors through $\Gamma_M$, then the classifying map
  to $\Tgn$ factors through $M$ (see \Cref{theo:theorem1} for a precise statement).
\end{remark}

\noindent \textbf{Paper overview.}
Let $M \subset \Tgn $ be an almost Weil--Petersson lift of a subvariety of $\Mgn$, and let $\Gamma_M$
be the stabilizer of $M$ in $\PMod(\Sgn)$. In \Cref{section:prelim} we quickly review all the necessary material from
Teichm\"uller theory needed for the rest of the paper, focusing on the Weil--Petersson metric. In \Cref{section:deformation} we introduce the
concept of a $\Gamma_M$-deformation: a class of orbifold maps
$\phi:X \to \Mgn$ which includes the classifying maps
of families of curves over quasi-projective varieties whose
monodromy factors through $\Gamma_M$. \Cref{theo:main_rigidity}
is a consequence of the stronger rigidity result for $\Gamma_M$-deformations
given in \Cref{theo:theorem1}: \emph{any nonconstant $\Gamma_M$-deformation
factors through $M$}. \Cref{section:deformation} also shows
that $\Gamma_M$ is the orbifold fundamental group of the \emph{normalization} of $\pi(M)$. In \Cref{section:main_theo} we provide the details of the
proof of \Cref{theo:theorem1} for the case of $X$ a finite type curve, starting with an elementary Riemannian geometry lemma
and then following the argument appearing in the proof of the Imayoshi--Shiga theorem
given in \cite{AAS}*{Section 4}. The key observation is that, due to the arguments of \cite{AAS},
\emph{holomorphic maps minimize energy in their homotopy class}.\footnote{For compact $C$
  the proof follows from the theory of harmonic maps (cf. ~\cite{ES}), but an extra step is needed when C is finite volume.}
 In \Cref{section:rigidity} we finish the proof of \Cref{theo:theorem1} and show
how it implies the rigidity stated in \Cref{theo:main_rigidity}
Finally, in \Cref{section:covering_geodesic} we prove \Cref{lemma:almost_geodesic} (see also a remark in page 2 of \cite{KahnMark} for the totally marked case) thereby concluding the proof of \Cref{cor:covering_rigidity}.
\subsection{Acknowledgments} I am very grateful to my advisor Benson Farb for his guidance and constant support
throughout this project, and for his numerous comments on earlier drafts of the paper that greatly improved the exposition.
I would like to thank Curtis McMullen for comments on an earlier draft and suggesting
\Cref{question:wp,question:question1,question:question2}; and Alex Wright for many insightful conversations, including sharing with me \cite{Wright-AH}*{Question 10.6} which inspired this work. Finally, I am grateful to
Sidhanth Raman for our numerous conversations on this subject, for sharing with me his knowledge of deformation theory
and complex analytic geometry and for extensive comments on a previous draft of the paper.


\section{Teichm\"uller geometry and the Weil--Petersson metric}\label{section:prelim}
\subsection{Basic definitions}Assume $3g-3 +n > 0$ and let
$\Sgn$ be a topological surface of genus $g$ with $n$-marked points (or $n$-punctures).
A marking is a homeomorphism $\varphi:\Sgn \to X$ from $\Sgn$ to
a Riemann surface $X$.
Teichm\"uller space
$\Tgn$ is the space of equivalence classes of markings
$[ \varphi:\Sgn \to X ]$, where two markings are equivalent if they differ up to isotopy
by a biholomorphism of $X$. Unless we need to stress the marking we will denote $[\varphi:\Sgn \to X]$
simply by $X$. In case we need to stress the subset of points being marked we introduce the following notation:
Let $\Sigma$ be a topological surface and
$u \subset \Sigma$ be a finite subset of points. $\Tt(\Sigma,u)$ denotes
the Teichm\"uller space of $\Sigma$ with the marked points given by the subset $u$.

The mapping class group $\Mod(\Sgn)$---the group of orientation-preserving
diffeomorphisms of $\Sgn$ up to isotopy---acts on $\Tgn$ by biholomorphisms.
In fact, Royden and Earl--Kra~\cite{Royden,earle-kra} showed that for $2g +n >4$, $\Mod(\Sgn)$ agrees with the group
of biholomorphisms of $\Tgn$. The pure mapping class group $\PMod(\Sgn)$
is the subgroup of $\Mod(\Sgn)$ fixing pointwise the marked points.
The quotient of $\Tgn$ by $\PMod(\Sgn)$ induces the projection
\[ \pi:\Tgn \to \Mgn = \Tgn/\PMod(\Sgn). \]


Let $B(X)$ denote the space of \emph{Beltrami forms} on $X$, i.e.
$(-1,1)$-forms $\mu$ on $X$ with $\|\mu\|_\infty < \infty$.
Denote by $M(X)$ the open unit ball in $B(X)$. There exists a holomorphic
submersion
\[ \Phi:M(X) \to \Tgn \ \ , \ \ \mu \to [f^{\mu}\circ \varphi:\Sgn \to X_\mu] \]
where $f^{\mu}$
is given by solving (locally) the Beltrami equation:
\[ \partial_{\bar{z}}f = \mu \partial_z f. \]
In particular, $T_X\Tgn$ is naturally identified with a quotient of $B(X)$.

Let $[\varphi:(\Sigma,u) \to (X,\varphi(u))] \in \Tt(\Sigma,u)$. Let $Q(X,u)$
denote the space of integrable holomorphic quadratic differentials on $X$, i.e.
meromorphic quadratic differentials $q$ on $X$ with poles of order at most one, and all poles contained in $u$.
Equivalently, $Q(X,u)$ is the space of holomorphic quadratic differentials $q$ on $X-\varphi(u)$ so that
\[ \int_{X-\varphi(u)} |q| < \infty.\]
When the marked points are not being stressed we denote $Q(X,u)$ simply by $Q(X)$.
There is a natural pairing
\[B(X) \times Q(X) \to \CC \ \ , \ \ (\mu,q) \mapsto \int_X \mu q \]
which factors through $T_X\Tgn \times Q(X)$. In particular,
$Q(X)$ is naturally identified with $T^*_X \Tgn$.

At the level of the universal cover and working always with punctures.\footnote{A similar description is possible
  for marked points by using groups with elliptic elements~\cite{earle-kra-sections}.}, we have the following notation.
Let $u \subset \Sigma$ be a finite set of points and $\varphi:(\Sigma ,u)\to (X,\varphi(u))$ be a marked Riemann surface.
Let $X' = X - \varphi(u)$ and let $\pi:\HH^2 \to X'$ be the universal cover of $X'$, with deck group $\Gamma$. Then $B(X')$ is given by the $\Gamma$-invariant
$(-1,1)$-forms on $\HH^2$, denoted by $B(\Gamma)$. Similarly, $Q(X')$ is given by $Q(\Gamma$), the $\Gamma$-invariant quadratic differentials on $\HH^2$
inducing an integrable quadratic differential on $X'$.



\subsection{Weil--Petersson metric}
The \emph{Weil--Petersson} metric, denoted by $g_{\WP}$, is induced from the hermitian product on $Q(X)$ given by
\[ \langle q_1,q_2\rangle_{\WP}= \int_X q_1 \overline{q_2} (ds^2)^{-1}, \]
where $ds^2$ is the hyperbolic volume form.
The hermitian product on $T\Tt(X)$
is given by
\[ \langle \mu_1,\mu_2\rangle_{\WP} = \overline{\langle \mu_1^*,\mu_2^*\rangle}_{\WP} \]
where $\mu_i^* \in Q(X)$ is the dual of $\mu_i$.
This means that
$\mu_i^*$ is characterized by
\[ \langle q,\mu_i^*\rangle_{\WP} = \int_X \mu_i q,  \ \  \forall q \in Q(X). \]
The metric $g_{\WP}$ is incomplete, K\"ahler, negatively curved
and geodesically convex~\cite{Wolpert}.\\

\noindent \textbf{Harmonic forms.} The map \[ q \mapsto \overline{q} (ds^2)^{-1}\]
sends a quadratic differential to an element of $B(X)$ which is called a
\emph{harmonic form}. Let $H(X)$ denote the space of harmonic forms on $X$. The
Weil--Petersson metric has a simple expression on harmonic forms:
Let $\mu_i = \overline{q_i}(ds^2)^{-1} \in H(X)$ for $i=1,2$.
Evidently $\mu^*_i = q_i$, and so
\[ \langle \mu_1,\mu_2\rangle_{\WP} = \int_X \overline{q_1}q_2 (ds^2)^{-1} = \int_X \mu_1 \overline{\mu_2} ds^2 =
  \int_X \mu_1 q_2. \]


\begin{definition}[\textbf{Almost Weil--Petersson}]\label{def:almost_totally_geodesic}
  Given a complex submanifold $M \subset \Tgn$, we say
  that $M$ is \emph{Weil-Petersson} if it is totally geodesic with respect
  to $g_{\WP}$.\footnote{It is not hard to see that
    any totally geodesic analytic subset of $\Tgn$ must in fact be a submanifold.}
  More generally, we say that $M$ is \emph{almost Weil--Petersson} if there exists a forgetful
  map $\mathcal F: \Tgn \to \Tgn[g,m]$ with $n\geq m$ and $3g-3+m > 0$. Such that
  \begin{enumerate}
    \item $\mathcal F|_M:M \to \mathcal F(M)$
      is a biholomorphism
      \item $\mathcal F(M)$
  is a Weil-Petersson complex submanifold of $\Tgn[g,m]$.
\end{enumerate}
  Let $N \subset \Mgn$ be a subvariety, a \emph{lift} of $N$ is an irreducible component
  of the preimage of $N$ in $\Tgn$. $N$ is called \emph{(almost) Weil--Petersson} if
  it has an \emph{(almost) Weil--Petersson} lift.
\end{definition}


\section{$\Gamma_M$-deformations and the associated orbifold fundamental group}\label{section:deformation}
Let $\pi: \Tgn \to \Mgn$ be the projection to moduli space. Let $N \subset \Mgn$ be
a subvariety and let $M \subset \Tgn$ be a lift of $N$. Let $\Gamma_M$
be the stabilizer of $M$ in $\PMod(\Sgn)$. In this section we introduce
the concept of a $\Gamma_M$-deformation which generalizes
an important property of classical deformations to the orbifold category.
$\Gamma_M$-deformations are central to our reformulation of \Cref{theo:main_rigidity},
stated at the end of this section (\Cref{theo:theorem1}). We start by proving
some facts about the group $\Gamma_M < \PMod(\Sgn)$.\\

\noindent \textbf{The associated orbifold fundamental group.}
The projection $\pi$
induces a holomorphic map
\[ \pi_M: M/\Gamma_M \to N. \]
The group $\Gamma_M$ is called the
\emph{associated orbifold fundamental group} of $N$(cf. ~\cite{Wright-AH}).
The following explains the terminology.

\begin{lemma}[cf. \cite{Diez-Harvey}*{Theorem 1}]\label{lemma:normalization}
  Suppose $3g - 3 + n > 0$. Given a subvariety $N \subset \Mgn$ with lift $M \subset \Tgn$,
  let $\Gamma_M$ be the stabilizer of $M$ in $\PMod(\Sgn)$. The induced holomorphic map
  \[\pi_M:M/\Gamma_M \to N\]
  is proper with finite fibers and injective outside a proper analytic subvariety. In particular, it is surjective.
  Furthermore, if $M$ is smooth $\pi_M$ gives
  the normalization of $N$ and $M/\Gamma_M$ is
  a quasi-projective variety.
\end{lemma}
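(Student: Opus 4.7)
The plan is to exploit three structural facts: $\pi:\Tgn \to \Mgn$ is the quotient by a properly discontinuous action of $\PMod(\Sgn)$ with finite stabilizers (using $3g-3+n > 0$); the preimage $\pi^{-1}(N)$ is an analytic subset of $\Tgn$ whose irreducible decomposition is locally finite; and, assuming $N$ is irreducible (as we may, since the statement reduces to each irreducible component), every irreducible component of $\pi^{-1}(N)$ is a $\PMod$-translate of $M$. The map $\pi_M$ arises by factoring $\pi|_M$ through the $\Gamma_M$-action, and I will analyze its properties fiber by fiber.

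For the finiteness of fibers, fix $y \in N$ and $X \in M \cap \pi^{-1}(y)$. Any other preimage of $y$ in $M$ has the form $\gamma X$ for some $\gamma \in \PMod(\Sgn)$, and the condition $\gamma X \in M$ amounts to saying that $\gamma^{-1} M$ is an irreducible component of $\pi^{-1}(N)$ passing through $X$. Local finiteness of the irreducible decomposition bounds the number of such components through $X$; combined with the finiteness of $\Stab_{\PMod}(X)$, this shows that the $\Gamma_M$-orbits in $M \cap \pi^{-1}(y)$ are finite in number. For properness, given compact $K \subset N$ lift it to compact $\tilde K \subset \Tgn$ surjecting onto $K$. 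Covering $\tilde K$ by finitely many small open sets in each of which only finitely many $\PMod$-translates of $M$ appear as local analytic components, one concludes that only finitely many distinct translates $\gamma M$ meet $\tilde K$. Hence $\pi_M^{-1}(K)$ is the image under $M \to M/\Gamma_M$ of a finite union of compact pieces $M \cap \gamma \tilde K$, and is compact. Surjectivity then follows because $\pi(M)$ is closed in $N$ by properness and contains a nonempty open subset of $N$ (since $\dim M = \dim N$, as $\pi$ is locally a finite quotient), hence equals $N$ by irreducibility.

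For generic injectivity, a point $\Gamma_M X \in M/\Gamma_M$ has a nontrivial fiber exactly when either $X$ has nontrivial stabilizer in $\PMod$, or $X$ lies in $M \cap \gamma M$ for some $\gamma \in \PMod \setminus \Gamma_M$. The first locus is the orbifold ramification of $\pi$ intersected with $M$; the second is a $\Gamma_M$-invariant union of intersections with other irreducible components of $\pi^{-1}(N)$, each a proper analytic subset of $M$. By local finiteness these descend to a proper analytic subset of $M/\Gamma_M$, outside which $\pi_M$ is a local biholomorphism. When $M$ is smooth, $M/\Gamma_M$ is a normal complex analytic space, so the proper, finite, surjective, generically injective morphism $\pi_M$ is the normalization of $N$ by the standard characterization. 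Quasi-projectivity of $M/\Gamma_M$ follows by passing to a level-$L$ cover $\mathcal M_{g,n}(L) \to \Mgn$ with $L \geq 3$ and $\Gamma(L)$ torsion-free: the intermediate quotient $M/(\Gamma_M \cap \Gamma(L))$ identifies with the normalization of an algebraic subvariety of the quasi-projective $\mathcal M_{g,n}(L)$, hence is quasi-projective, and $M/\Gamma_M$ is a finite quotient thereof.

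The main technical obstacle is the properness step, which requires upgrading the pointwise proper discontinuity of $\PMod$ to a finiteness statement for the family of $\PMod$-translates of $M$ meeting a compact set. This is precisely where the local finiteness of the analytic decomposition of $\pi^{-1}(N)$ becomes indispensable, since naive proper discontinuity does not by itself rule out infinitely many translates of an unbounded submanifold accumulating near a compact subset of $\Tgn$.
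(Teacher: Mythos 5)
Your argument follows essentially the same route as the paper: local finiteness of the family of translates $\{\gamma M\}$, viewed as irreducible components of $\pi^{-1}(N)$, gives finite fibers and properness (the paper phrases this as closedness plus finite fibers), generic injectivity comes from descending the locus $\bigcup\{M\cap\gamma M:\gamma M\neq M\}$ to an analytic subset of $M/\Gamma_M$, and the final claims follow from Cartan's theorem on normality of $M/\Gamma_M$ together with the characterization of the normalization (your detour through a level-$L$ cover for quasi-projectivity is a harmless variant of the paper's appeal to normalization of the quasi-projective $N$). One caveat: your ``exactly when'' description of the non-injectivity locus is off, and including the set of points of $M$ with nontrivial $\PMod(\Sgn)$-stabilizer is both unnecessary and potentially fatal to the claim that the bad set is \emph{proper}, since that set can be all of $M$ (e.g.\ when $M$ is pointwise fixed by a finite-order mapping class, as happens for fixed loci of finite groups or when an element acts trivially on $\Tgn$); points with nontrivial stabilizer do not create distinct $\Gamma_M$-orbits over the same point of $N$ unless they already lie on some $M\cap\gamma M$ with $\gamma M\neq M$, so you should simply drop that first locus and keep only the translate-intersection locus, after which your proof is complete.
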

\begin{proof}
  We need to show that the projection $\pi_M$ satisfies the following:
\begin{enumerate}
\item has finite fibers,
\item is closed,
\item is injective outside a proper analytic subvariety.
\end{enumerate}

The family of closed subsets $\{gM:g \in \PMod(\Sgn)\}$ in $\Tgn$ is a subfamily of the decomposition of $\pi^{-1}(N)$ into its irreducible components.
Thus, it is a locally finite family of closed subsets of $\Tgn$. This means
that for every $x \in \Tgn$, there exists a neighborhood $U_x$ of $x$
intersecting only finitely many $g_iM$. This automatically gives that $\pi_M$ has finite fibers, and an elementary argument shows that
$\pi_M$ is closed.

By the Semi-Proper Mapping Theorem~\cite{Fischer}*{Theorem 1.19}, the map $M \to M/\Gamma_M$
sends $\Gamma_M$-invariant analytic subsets of $M$ to analytic subsets of $M/\Gamma_M$.
The map $\pi_M$ is injective on the complement of the projection of the subset
\[ W := \bigcup_{g\in \Gamma_M} \{gM \cap M: g(M) \cap M \mbox{ proper subvariety of $M$} \} \subset M. \]
$W$ is analytic and $\Gamma_M$-invariant. Thus, the projection of $W$
to $M/\Gamma_M$ is analytic and the third item follows.
Since $\pi_M:M/\Gamma_M \to N$ is proper, the proper mapping theorem implies that
$\pi_M$ is surjective.



If $M$ is smooth then $M/\Gamma_M$ is a normal analytic space by
a theorem of Cartan~\cite{Cartan} and the theorem follows.
\end{proof}
\begin{remark} As the examples in \cite{Diez-Harvey,Hidalgo_2024} show, $\pi_M$ is not in general a biholomorphism.
\end{remark}
\noindent\textbf{Level L-structures.}
Let $L \geq 3$ and $3g -3 +n > 0$. Let $\PMod(\Sgn)[L]$ be the
level $L$ pure mapping class group~\cite{farb-margalit}*{Chapter 6.4.2}.
Let \[\Gamma_M[L] := \Gamma_M \cap \PMod(\Sgn)[L]. \]
Since $L \geq 3$, $\PMod(\Sgn)[L]$ is torsion free. Thus, the induced map
\[ M \to M/\Gamma_M[L] \]
is an unramified covering map. Let
\[ \pi_L:\Tgn \to \mathcal M_{g,n}[L] \]
be the level-$L$ projection to the moduli space of curves with level $L$
structures. The same argument as in \cref{lemma:normalization} shows
that the induced map
\[\pi_{M[L]}: M/\Gamma_M[L] \to \pi_L(M) \subset \mathcal M_{g,n}[L]\]
is a biholomorphism: indeed, $\pi_{M[L]}$ is a local homeomorphism by covering space theory. Since it is also proper it is a covering map,
but the argument in \Cref{lemma:normalization} shows that it has degree one.
The image $N[L]:= \pi_L(M)$ is an irreducible component
of the preimage of $N$ to $\mathcal M_{g,n}[L]$.

\begin{definition}[\textbf{Algebraic Orbifold}]\label{def:alg_orbi}
  Let $\widetilde{X}$ be a reduced complex space, and
  $\Gamma_X$ a group acting properly discontinuously on $\widetilde{X}$ by biholomorphisms,
   we denote the quotient by $X = \widetilde{X}/\Gamma$.
  Assume that there exists a finite index
  subgroup $\Gamma'_X < \Gamma_X$ acting freely on $\widetilde{X}$ and
  that the induced map $X':=\widetilde{X}/\Gamma'_X \to X$ is a finite ramified covering.

  We say that the orbifold $X = \widetilde{X}/\Gamma_X$ with orbifold
  structure $(\widetilde{X},\Gamma_X)$ is \emph{algebraic} if either one of the following is satisfied:
  \begin{enumerate}
  \item $X$ has a finite (proper with finite fibers) ramified covering map to a quasiprojective variety $\overline{X}$.
  \item $X'$ is quasiprojective.
\end{enumerate}
\end{definition}
Examples of algebraic orbifolds are given by quasi-projective varieties
and by $M/\Gamma_M$ for $M$ a lift of a subvariety of $\Mgn$.\\

\noindent\textbf{Classical deformations.} Let $Z,W$ be two complex manifolds.
A local deformation of a holomorphic map $f:Z \to W$ is given by a family
of holomorphic maps $f_t:Z_t \to W$ for $t\in \Delta$, with a biholomorphism
$\varphi:Z_0 \cong Z$ inducing $(Z_0,f_0) \cong (Z,f)$. If the family $(Z_t)$ is smoothly trivial,\footnote{note that we are \emph{not assuming} that $f$ is proper.} it follows that the groups $(f_t)_*(\pi_1(Z_t)) < \pi_1(W)$ are
pairwise conjugate. The following is our generalization to the orbifold setting
of a deformation of the map $\pi_M:M/\Gamma_M \to N \subset \Mgn$.

\begin{definition}[\textbf{$\Gamma$-deformation}]\label{defn:gamma}
  Let $X= \widetilde{X}/\Gamma_X$ be an algebraic orbifold.
  Let $ \phi:X \to \Mgn$
  be a holomorphic map of complex orbifolds with associated homomorphism
  $\phi_*: \Gamma_X \to \PMod(\Sgn)$. This means
  that there is a holomorphic map $\tilde{\phi}:\widetilde{X} \to \Tgn$
  equivariant with respect to the homomorphism $\phi_*$ and fitting into the diagram
  \begin{center}
    \begin{tikzcd}
      \widetilde{X} \ar[r,"\tilde{\phi}"] \ar[d] &\Tgn \ar[d] \\
      X \ar[r,"\phi"'] &\Mgn
      \end{tikzcd}
  \end{center}

  Let $\Gamma < \PMod(\Sgn)$ be an arbitrary subgroup.
  We say that $\phi$ is a \emph{$\Gamma$-deformation} if, up to conjugation in $\PMod(\Sgn)$,
  the homomorphism $\phi_*$ factors through the inclusion $\Gamma \hookrightarrow \PMod(\Sgn)$.
\end{definition}

Our main rigidity result is the following variant of the Imayoshi-Shiga theorem (see also \Cref{theo:Imayoshi-Shiga}).
\begin{theorem}\label{theo:theorem1}
Let $3g-3+n > 0$. Let $M$ be an almost Weil--Petersson
  complex submanifold of $\Tgn$. Let $\Gamma_M < \PMod(\Sgn)$
  be the stabilizer of $M$. Let $X=\widetilde{X}/\Gamma_X$ be an algebraic orbifold and
  $\phi:X \to \Mgn$ be a nonconstant $\Gamma_M$-deformation. Then, there is a lift
  $\tilde{\phi}:\widetilde{X} \to \Tgn$ of $\phi$ that factors through the inclusion $M \hookrightarrow \Tgn$.
\end{theorem}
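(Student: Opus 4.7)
The plan is to reduce to the case when $M$ is genuinely Weil--Petersson (not merely almost so) and to handle separately the case when $X$ is a finite-type curve and the general case of higher-dimensional $X$. By the definition of ``almost Weil--Petersson'', there is a forgetful map $\mathcal F:\Tgn \to \Tgn[g,m]$ restricting to a biholomorphism from $M$ onto a totally geodesic submanifold $\mathcal F(M) \subset \Tgn[g,m]$. First I would post-compose $\tilde\phi$ with $\mathcal F$ and reduce to showing that $\mathcal F \circ \tilde\phi:\widetilde X \to \Tgn[g,m]$ factors through $\mathcal F(M)$; the biholomorphism $M \cong \mathcal F(M)$ then recovers the conclusion for $\tilde\phi$.

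\textbf{The curve case.} When $X$ is a finite-type Riemann surface with $\widetilde X = \HH$, I would follow the energy-minimization scheme of \cite{AAS}*{Section 4}. Two geometric facts drive the argument: the negative sectional curvature of $g_{\WP}$ together with the total geodesy of $\mathcal F(M)$ make $\mathcal F(M)$ geodesically convex, so that the nearest-point projection $P$ onto $\mathcal F(M)$ is defined on a tubular neighborhood, is $\Gamma_{\mathcal F(M)}$-equivariant, and is distance non-increasing; and holomorphic maps into $(\Tgn[g,m], g_{\WP})$ minimize energy in their homotopy class---the key observation extracted from \cite{AAS}. Setting $\psi = P \circ \mathcal F \circ \tilde\phi$, I obtain a $\Gamma_{\mathcal F(M)}$-equivariant map $\HH \to \mathcal F(M)$ homotopic to $\mathcal F \circ \tilde\phi$ along perpendicular geodesic rays, with $E(\psi) \leq E(\mathcal F \circ \tilde\phi)$ and strict inequality on any region where the image escapes $\mathcal F(M)$. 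Combined with the energy minimality of the holomorphic map, this forces equality, hence $\mathcal F(\tilde\phi(\widetilde X)) \subseteq \mathcal F(M)$.

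\textbf{Main obstacle.} The principal analytic difficulty is that $g_{\WP}$ is incomplete and $X$ may have cusps, so the total energy of $\mathcal F \circ \tilde\phi$ may be infinite, rendering the naive energy comparison vacuous. Following the pattern in \cite{AAS}, I would exhaust $X$ by compact subsurfaces $X_R$, carry out the energy comparison on each $X_R$, and pass to the limit, carefully controlling the contributions near the cusps. Verifying that the nearest-point projection $P$ is defined throughout the image of $\tilde\phi$ (despite the incompleteness of the Weil--Petersson geometry) and that the boundary terms in the exhaustion vanish in the limit is the delicate step. For compact $X$ the situation is much cleaner: the Eells--Sampson theorem furnishes a harmonic representative in the homotopy class of $\mathcal F \circ \tilde\phi$, after which standard uniqueness/Bochner arguments identify it with the holomorphic lift and the projection argument becomes straightforward.

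\textbf{From curves to higher-dimensional $X$.} For $X$ of arbitrary dimension, the set $Y := \tilde\phi^{-1}(M)$ is a closed analytic subset of $\widetilde X$; the goal is to show $Y = \widetilde X$. Through each point $p \in \widetilde X$ I would produce a quasi-projective algebraic curve $C \hookrightarrow X$ whose lift to $\widetilde X$ passes through $p$, for instance by taking a generic hyperplane slice of a projective compactification of a suitable finite cover of $X$. The restriction $\phi|_C$ remains a $\Gamma_M$-deformation since its orbifold monodromy factors through the composition $\pi_1^{\mathrm{orb}}(C) \to \Gamma_X \xrightarrow{\phi_*} \Gamma_M$, so the curve case forces the lift of $\phi|_C$ into $M$, placing $p$ in $Y$. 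Since this holds for all $p$, $Y = \widetilde X$. The only subtlety here is arranging that $\phi|_C$ is nonconstant for enough curves to cover $\widetilde X$, which follows from standard Bertini-type transversality together with the hypothesis that $\phi$ itself is nonconstant.
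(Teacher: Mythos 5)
Your overall scheme---orthogonal projection onto the totally geodesic submanifold plus the energy comparison of \cite{AAS} for finite-type curves, then covering a general algebraic orbifold by curves on which the map is nonconstant, as in \Cref{lemma:crap}---is essentially the paper's route. However, your opening reduction from the almost Weil--Petersson case to the Weil--Petersson case has a genuine gap. Showing that $\mathcal F\circ\tilde\phi$ factors through $\mathcal F(M)$ only gives $\tilde\phi(\widetilde X)\subseteq \mathcal F^{-1}(\mathcal F(M))$, and the fibers of the forgetful map are positive-dimensional, so ``the biholomorphism $M\cong\mathcal F(M)$ then recovers the conclusion for $\tilde\phi$'' does not follow. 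What you can form is the holomorphic map $\hat\phi:=(\mathcal F|_M)^{-1}\circ\mathcal F\circ\tilde\phi$ into $M$, but you still must prove $\hat\phi=\tilde\phi$ (equivalently, that $\hat\phi$ is an actual lift of $\phi$). The paper closes this by (i) showing $\mathcal F_*|_{\Gamma_M}$ is injective---$\ker\mathcal F_*$ is a surface braid group, hence torsion free, and an element of $\Gamma_M\cap\ker\mathcal F_*$ would have to fix $M$ pointwise---which also yields that $\mathcal F\circ\tilde\phi$ is nonconstant and that $\hat\phi$ is $\phi_*$-equivariant, and (ii) invoking the equivariant uniqueness statement \Cref{theo:Imayoshi-Shiga} (itself an output of the same energy argument) to conclude $\hat\phi=\tilde\phi$. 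Without such a step your argument constrains only $\mathcal F\circ\tilde\phi$, and your higher-dimensional step inherits the problem, since you need the restriction of the \emph{given} lift $\tilde\phi$ over each curve to land in $M$ in order for $Y=\tilde\phi^{-1}(M)$ to exhaust $\widetilde X$.

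A second, smaller gap: in the curve case you only assert the nearest-point projection onto the totally geodesic submanifold on a tubular neighborhood and flag its global definition as ``the delicate step'' without supplying the input that makes it work. The paper's \Cref{lemma:projection} uses Wolpert's results ($g_{\WP}$ is geodesically convex, negatively curved, and $\exp_p:\Ee_p\to\Tgn$ is a homeomorphism despite incompleteness) to show the normal exponential map of the submanifold is a diffeomorphism onto \emph{all} of $\Tgn$, giving a globally defined, equivariant deformation retraction $\pi^t_M$ whose derivative norms are non-increasing in $t$; this cannot be skipped, since the image of $\tilde\phi$ need not stay near $M$. By contrast, your worry about infinite total energy is a non-issue: the holomorphic lift is $L$-Lipschitz from $\HH^2$ to $(\Tgn,g_{\WP})$ because the Kobayashi (Teichm\"uller) metric dominates $g_{\WP}$ up to the constant $L$, so the equivariant energy over the finite-area curve is automatically finite; the genuinely delicate analytic point in the \cite{AAS} scheme is the Stokes-type argument that $\int_C f_t^*\omega_{\WP}$ is independent of $t$, which uses the distance-growth bound on $\|d\tilde F\|$---your exhaustion remark points at this but does not carry it out. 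Your covering-by-curves argument otherwise matches the paper's (note the curves need not be smooth; one passes to normalizations).
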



\section{The case of a finite type curve $C$}\label{section:main_theo}

On this section we prove \Cref{theo:theorem1} for the case of $X$ a smooth finite type
curve. We start with the following special case.

\begin{proposition}\label{lemma:rigidity_curves} Suppose $3g-3+n > 0$. Given
  a Weil--Petersson complex submanifold $M \subset \Tgn$
  with stabilizer $\Gamma_M<\PMod(\Sgn)$. Let $C$
  be a smooth finite type curve and $\phi:C \to \Mgn$ be a nonconstant $\Gamma_M$-deformation.
  Then, there exists a lift of $\phi$ to $\Tgn$ that factors through the inclusion
  $M \hookrightarrow \Tgn$.
\end{proposition}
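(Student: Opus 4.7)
The plan is to prove this by an Imayoshi--Shiga style energy-minimization argument along the lines of \cite{AAS}*{Section 4}, combined with an elementary Riemannian-geometry input about distances to totally geodesic complex submanifolds of negatively curved K\"ahler manifolds. To begin, I would unpack the $\Gamma_M$-deformation hypothesis to obtain a holomorphic lift $\tilde\phi: \tilde C \to \Tgn$ of $\phi$ equivariant for a homomorphism $\phi_*: \pi_1(C) \to \Gamma_M \subseteq \PMod(\Sgn)$, where $\tilde C$ denotes the universal cover of $C$. The proposition becomes equivalent to showing $\tilde\phi(\tilde C) \subseteq M$.

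Next, I would establish the elementary Riemannian lemma: using that $g_{\WP}$ is K\"ahler, negatively curved, and geodesically convex, and that $M \subseteq \Tgn$ is a totally geodesic complex submanifold, I would show that the squared-distance function $\rho(x) := d_{\WP}(x, M)^2$ is plurisubharmonic (where defined) and strictly so in the holomorphic normal directions to $M$. Equivalently, the nearest-point projection $p_M$ onto $M$, where defined, is holomorphic and strictly distance-decreasing off $M$. This is the ``elementary Riemannian geometry lemma'' mentioned in the paper overview, and is a standard consequence of the second-variation formula combined with the Kähler condition.

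Then I would run the energy argument. The key observation highlighted in the paper overview is that holomorphic maps from finite-type Riemann surfaces to K\"ahler manifolds of nonpositive bisectional curvature minimize energy in their equivariant homotopy class. Suppose $\tilde\phi(\tilde C) \not\subseteq M$. The function $\rho \circ \tilde\phi$ is $\pi_1(C)$-invariant via $\phi_*$ (since $\rho$ is $\Gamma_M$-invariant) and subharmonic, hence descends to a nonnegative subharmonic function on $C$. Composing $\tilde\phi$ equivariantly with $p_M$ produces a competitor holomorphic equivariant map, homotopic to $\tilde\phi$, whose energy is strictly smaller than that of $\tilde\phi$ by the strict distance-decrease at points where $\tilde\phi$ does not already lie in $M$. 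This contradicts energy minimization, forcing $\tilde\phi(\tilde C) \subseteq M$.

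The main obstacle I expect is the interplay between the cusps of $C$ and the incompleteness of $(\Tgn, g_{\WP})$: the total energy of $\tilde\phi$ may a priori be infinite, and $p_M$ may fail to be defined globally on $\Tgn$. This is precisely the ``extra step'' flagged in the paper overview footnote, and I would deal with it by following \cite{AAS}*{Section 4}---truncating $C$ to a compact exhaustion, controlling boundary contributions using Imayoshi--Shiga style extensions of holomorphic maps from punctured discs into $\Mgn$, and passing to the limit to justify both the energy comparison and the equivariant construction of the competitor.
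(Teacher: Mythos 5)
Your overall strategy is the paper's: lift to an equivariant holomorphic map $\tilde\phi$, retract onto $M$ using its totally geodesic character, and run the Imayoshi--Shiga/\cite{AAS} energy argument to conclude the retracted map equals $\tilde\phi$. However, your ``key lemma'' contains an unjustified (and unnecessary) claim: that the nearest-point projection $p_M$ is \emph{holomorphic}, equivalently that $d_{\WP}(\cdot,M)^2$ is plurisubharmonic with strictness in normal directions. Nothing in ``K\"ahler, negatively curved, totally geodesic complex submanifold'' gives holomorphicity of the metric projection (that is a special feature of symmetric situations such as complex hyperbolic space), and the paper never uses it: the only property extracted from total geodesy is that the geodesic retraction $\pi^t_M$ obtained from the normal exponential map is $\Gamma_M$-equivariant and \emph{norm non-increasing} on tangent vectors (a Jacobi-field computation), with strict decrease away from $TM$ and the radial directions. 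Your energy comparison in fact only needs this metric statement, so you should delete the holomorphicity/psh claims rather than rely on them; as written they are a gap. Relatedly, your assertion that the competitor has ``strictly smaller energy off $M$'' glosses the exceptional radial directions; the correct statement is the non-increasing property plus non-constancy of $t\mapsto E(f_t)$ when $\tilde\phi\neq \pi^1_M\circ\tilde\phi$, as in \cite{AAS}*{Lemma 4.1}.

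Two further points are deferred to the wrong places. First, the global existence of $p_M$ (and of the whole retraction) on the incomplete space $(\Tgn,g_{\WP})$ is not handled anywhere in \cite{AAS}*{Section 4}; it requires Wolpert's results that $g_{\WP}$ is geodesically convex and that $\exp_p$ is a homeomorphism from its domain onto $\Tgn$, which is what makes the normal exponential map of $M$ surjective and the projection well defined. Second, the principle ``holomorphic maps minimize equivariant energy'' is not an off-the-shelf fact here: it is exactly the content of the Wirtinger inequality $E_x(f)\,\omega_{\HH^2}\ge f^*\omega_{\WP}$ (with equality for holomorphic maps) combined with a Stokes-type argument showing $\int_C f_t^*\omega_{\WP}$ is constant in $t$; the latter needs the quantitative inputs you do not mention, namely that $\tilde\phi$ is $L$-Lipschitz from $\HH^2$ to $(\Tgn,g_{\WP})$ (Royden's theorem that $d_{\Teich}$ is the Kobayashi metric plus McMullen's bound $\|\cdot\|_{\WP}\le L\|\cdot\|_{\Teich}$), together with the linear growth bound on $\|d\tilde F\|$ along the homotopy. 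Your proposed control ``via Imayoshi--Shiga style extensions over punctured discs'' is not the mechanism used and would not obviously work for the Weil--Petersson target; the exhaustion/boundary-term estimate of \cite{AAS} with the Lipschitz and growth bounds is what closes the argument.
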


The first step in the proof of \Cref{lemma:rigidity_curves} is the following
elementary observation,
whose proof we include for completeness.

\begin{lemma}[\textbf{Orthogonal Projection}]\label{lemma:projection} Suppose
  $3g - 3 + n > 0$. Let $Z$ be
  a totally geodesic submanifold of $\Tgn$ with respect to the Weil--Petersson metric and
  let $\Gamma_Z$ be the stabilizer of $Z$ in $\Mod(\Sgn)$. There exists a
  $\Gamma_Z$-equivariant smooth deformation retraction onto $Z$
  \[ \pi^t_Z:\Tgn \to \Tgn  \ \ , \ \ t \in [0,1]\]
  with the following property: for any $w \in T\Tgn$ the map
  \[t \to \|d\pi^t_Z(w)\|_{\WP} \]
  is non-increasing. If $w \notin TZ$ and $w$ is not parallel
  to a geodesic normal to $Z$ then $\|d\pi^t_Z(w)\|_{\WP}$ is strictly decreasing.
\end{lemma}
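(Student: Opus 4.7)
The plan is to build $\pi^t_Z$ as the geodesic homotopy from the identity to the nearest-point projection onto $Z$, and then establish the norm monotonicity via a Jacobi-field calculation that exploits strict negative curvature together with the fact that $\gamma_x$ meets the totally geodesic $Z$ perpendicularly. First I would define $\pi_Z:\Tgn \to Z$ as the nearest-point projection. By geodesic convexity of $g_{\WP}$ and strict negative curvature, the squared-distance $z\mapsto d_{\WP}(x,z)^2$ on $Z$ is strictly convex, forcing uniqueness; existence needs extra care because $g_{\WP}$ is incomplete, but follows by working inside the Weil--Petersson completion (a $\mathrm{CAT}(0)$ space) and checking that for $x\in\Tgn$ the minimizer lies in $Z$ and not in an added boundary stratum. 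I would then set $\pi^t_Z(x):=\gamma_x(t)$, where $\gamma_x:[0,1]\to\Tgn$ is the unique $g_{\WP}$-geodesic with $\gamma_x(0)=x$ and $\gamma_x(1)=\pi_Z(x)$. The $\Gamma_Z$-equivariance is automatic because $\Gamma_Z$ acts by isometries preserving $Z$, and smoothness of $\pi^t_Z$ follows from the implicit function theorem applied to the perpendicularity characterization $\exp^{-1}_{\pi_Z(x)}(x)\perp T_{\pi_Z(x)}Z$.

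For the norm estimate, I would fix $x\in\Tgn$ and $w\in T_x\Tgn$, take the variation $y(s):=\exp_x(sw)$, and form the family $\gamma_s:=\gamma_{y(s)}$. The variational field $J(t):=\partial_s\gamma_s(t)|_{s=0}$ is a Jacobi field along $\gamma_x$ with $J(0)=w$, $J(1)=d\pi_Z(w)\in T_{\pi_Z(x)}Z$, and $d\pi^t_Z(w)=J(t)$. Setting $F(t):=\|J(t)\|_{\WP}^2$, the standard Jacobi identity combined with negative sectional curvature gives
\[
  F''(t)=2\|\nabla_t J\|_{\WP}^2-2\langle R(J,\gamma'_x)\gamma'_x,J\rangle_{\WP}\geq 0,
\]
so $F$ is convex on $[0,1]$.

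The crux is to show $F'(1)=0$. Since $\gamma_s$ meets $Z$ perpendicularly at $\gamma_s(1)=\pi_Z(y(s))$, differentiating the identity $\langle\gamma'_s(1),V(\gamma_s(1))\rangle_{\WP}=0$ at $s=0$ for a vector field $V$ tangent to $Z$ gives
\[
  \langle\nabla_t J(1),V\rangle_{\WP}=-\langle\gamma'_x(1),\nabla_{J(1)}V\rangle_{\WP}.
\]
The totally geodesic hypothesis on $Z$ ensures $\nabla_{J(1)}V\in T_{\pi_Z(x)}Z$ whenever $J(1),V\in T_{\pi_Z(x)}Z$, and $\gamma'_x(1)\perp T_{\pi_Z(x)}Z$, so the right-hand side vanishes. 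Taking $V$ to be any extension of $J(1)$ along $Z$ yields $F'(1)=2\langle\nabla_t J(1),J(1)\rangle_{\WP}=0$. A convex function on $[0,1]$ with vanishing right-endpoint derivative is non-increasing, which proves the first assertion.

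For strict monotonicity, $F''(t_0)>0$ unless simultaneously $\nabla_t J(t_0)=0$ and $R(J,\gamma'_x)\gamma'_x(t_0)=0$; strict negative curvature makes the second condition force $J(t_0)\parallel\gamma'_x(t_0)$, and then parallel transport forces $J\equiv(at+b)\gamma'_x$ throughout. The boundary conditions $J(1)\in T_{\pi_Z(x)}Z$ and $\gamma'_x(1)\perp T_{\pi_Z(x)}Z$ reduce this to $J(t)=c(1-t)\gamma'_x(t)$, which occurs precisely when $w$ is tangent to the normal geodesic $\gamma_x$ (or $w\in T_xZ$ in the degenerate case $x\in Z$). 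Outside this exceptional locus $F$ is strictly convex on some subinterval, and combined with $F'(1)=0$ this yields strict decrease. The main obstacle I expect is making the nearest-point projection rigorous in the incomplete WP setting; the $\mathrm{CAT}(0)$ structure on the augmented Teichm\"uller space resolves existence and uniqueness, and the remaining smoothness assertion reduces to showing that the normal exponential map off $Z$ is a global diffeomorphism onto $\Tgn$, a Cartan--Hadamard-type statement adapted to the incomplete negatively curved setting.
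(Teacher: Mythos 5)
Your construction and your monotonicity argument are essentially the paper's: the paper also builds the retraction by flowing along geodesics normal to $Z$ (it writes $\pi^t_Z=\exp\circ\phi_{1-t}\circ\exp^{-1}$ for the normal exponential map $\exp:\Nn Z\cap\Ee\to\Tgn$ and the fiberwise scaling $\phi_{1-t}$, which is the same map as your geodesic homotopy to the nearest-point projection), and it proves the norm statement by observing that $d\pi^{t}_Z(w)$ is a Jacobi field along the normal geodesic and invoking ``a standard computation'' using negative curvature and total geodesy. Your computation --- convexity of $F(t)=\|J(t)\|^2_{\WP}$ from the Jacobi equation, $F'(1)=0$ from differentiating the perpendicularity condition and killing the second-fundamental-form term because $Z$ is totally geodesic, and the rigidity analysis identifying the exceptional case $J(t)=c(1-t)\gamma_x'(t)$ --- is exactly that computation, spelled out correctly (the only loose point is the last sentence: the clean statement is that if $F$ is constant on any subinterval then $F''\equiv 0$ there, so $\nabla_tJ=0$ and $J\parallel\gamma_x'$ at a point, and uniqueness of Jacobi fields propagates $J=c\gamma_x'$ to all of $[0,1]$, forcing the excluded cases; this is what your rigidity step should say).

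The genuine gap is the one you yourself flag and defer: the Teichm\"uller-specific input needed to know the retraction exists globally and is smooth despite incompleteness of $g_{\WP}$. Your CAT(0)-completion route gives existence and uniqueness of a nearest point in the \emph{closure} $\overline Z$ inside augmented Teichm\"uller space, but the assertion that for $x\in\Tgn$ this minimizer lies in $Z$ rather than in an added boundary stratum is not proved, and it is not a formality (one would need first-variation/non-refraction arguments at the augmented boundary). Likewise, ``the normal exponential map off $Z$ is a global diffeomorphism onto $\Tgn$, a Cartan--Hadamard-type statement adapted to the incomplete setting'' is precisely the nontrivial claim, not a routine adaptation: Cartan--Hadamard fails without completeness, and this is where genuine Weil--Petersson geometry enters. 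The paper closes this step differently and more directly: injectivity of $\exp$ on $\Nn Z\cap\Ee$ follows from negative curvature and $Z$ being totally geodesic, and surjectivity follows from Wolpert's theorem that for every $p$ the exponential map $\exp_p:\Ee_p\to\Tgn$ is a homeomorphism onto $\Tgn$ (Wolpert, Corollary 5.4), i.e.\ from the known geodesic convexity/visibility properties of the incomplete WP metric. To complete your proof you should either import that result in place of the completion argument, or actually prove the boundary-avoidance claim for the nearest-point projection; as written, the existence/smoothness of $\pi_Z$ --- on which the Jacobi-field argument and the identity $d\pi^t_Z(w)=J(t)$ depend --- is assumed rather than established.
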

\begin{proof}
  Endow $\Tgn$ with the Weil--Petersson metric $g_{\WP}$. Let $\Ee \subset T\Tgn$ be the domain of the exponential map. Since $g_{\WP}$ is incomplete $\Ee \not= T\Tgn$.
  Let $\Ee_Z:= \Nn Z \cap \Ee$ for $\Nn Z$ the normal bundle of $Z$.
  The metric $g_{\WP})$ is negatively curved, thus
  \[ \exp:\Ee_Z \to \Tgn \]
  is injective and a diffeomorphism onto its image.
  Since $g_{\WP}$ is not complete, a priori there is no reason for $\exp$ to be surjective.
  Yet, Wolpert showed that for any $p \in \Tgn$, the map $\exp_p:\Ee_p \to \Tgn$
  is a homeomorphism~\cite{Wolpert}*{Corollary 5.4} , and this implies surjectivity of $\exp$.

  Let $\phi_t:\Ee_Z \to \Ee_Z$ be given by $(p,v) \to (p,tv)$. Define
  \[ \pi^t_Z:\Tgn \to \Tgn  \ \ , \ \  \pi^t_Z:=\exp \circ \phi_{1-t} \circ \exp^{-1}.\]
  Since $\exp$ and $\phi_t$ are $\Gamma_Z$-equivariant, it follows that $\pi^t_Z$ is
  $\Gamma_Z$-equivariant. The map $\pi^1_Z$ is precisely the orthogonal projection map to $Z$.

  Let $p \in \Tgn$ and $w \in T_p \Tgn$. Observe that
  $J(t) = d\pi^{1-t}_Z(w)$ is a Jacobi field. A standard computation,
  using the fact that $Z$ is totally geodesic, shows that $\|J(t)\|_{\WP}$
  is non-decreasing.
  Furthermore, if $p \notin Z$ and $w$ is not parallel to a geodesic normal to $Z$ it
  follows that $J(t)$ has a non-trivial normal component
  and so $\|J(t)\|_{\WP}$ is strictly increasing. \end{proof}


Having \Cref{lemma:projection} at hand, we can prove \Cref{lemma:rigidity_curves}.
\begin{proof}[Proof of \Cref{lemma:rigidity_curves}]
  Let $M \subset \Tgn$ be a Weil--Petersson complex submanifold.
  By \Cref{lemma:projection} there is a smooth homotopy
  \[\pi^t_M:\Tgn \to \Tgn \ \ , \ \ t \in [0,1]\]
  between the identity and the orthogonal projection $\pi^1_M:\Tgn \to M$.
  Let $\Gamma_M < \PMod(\Sgn)$ be the stabilizer of $M$. Let $C$
  be a smooth finite type curve and
  $\phi:C \to \Mgn$ be a nonconstant $\Gamma_M$-deformation. In particular,
  the lift $\tilde{\phi}:\HH^2 \to \Tgn$, which equivariant with respect
  to the homomorphism
  $\phi_*:\pi_1(C) \to \PMod(\Sgn)$, is \emph{not constant}. Up to post-composing
  with a mapping class, we can assume that $\phi_*:\pi_1(C) \to \Gamma_M.$
  Define the map
\[ \tilde{\psi}:\HH^2 \to \Tgn \ \ , \ \ \tilde{\psi} = \pi^1_M \circ \tilde{\phi}. \]
Since $\pi^1_M$ is $\Gamma_M$-equivariant, it follows that $\tilde{\psi}$ is $\phi_*$-equivariant. \Cref{lemma:rigidity_curves} will be proven
if we show that $\tilde{\phi} = \tilde{\psi}$.
\begin{claim}\label{claim:c} $\tilde{\phi} = \tilde{\psi}$
\end{claim}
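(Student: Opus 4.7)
The plan is to compare the Weil--Petersson energies of $\tilde\phi$ and $\tilde\psi$, following the AAS proof of the Imayoshi--Shiga theorem. Equip $C$ with its hyperbolic metric and $\Tgn$ with the Weil--Petersson metric. Because $\pi^t_M$ is $\Gamma_M$-equivariant and $\phi_*$ factors through $\Gamma_M$, the family of maps $\pi^t_M \circ \tilde\phi \colon \HH^2 \to \Tgn$ is $\phi_*$-equivariant; in particular its pointwise energy density is a $\pi_1(C)$-invariant function on $\HH^2$ that descends to $C$. By \Cref{lemma:projection}, the length $\|d\pi^t_M(w)\|_{\WP}$ is non-increasing in $t$ for every $w$, so applied to $w = d\tilde\phi_x(v)$ it gives
\[ e(\tilde\psi)(x) \le e(\tilde\phi)(x) \qquad \text{for all } x \in \HH^2, \]
and correspondingly $E(\tilde\psi) \le E(\tilde\phi)$ after descent to $C$ (where both quantities may a priori be infinite).

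The main step is the reverse inequality, namely that the holomorphic map $\tilde\phi$ is energy-minimizing in its $\phi_*$-equivariant homotopy class. Over a compact domain this is classical: $\tilde\phi$ is harmonic (a holomorphic map between K\"ahler manifolds), and the K\"ahler identity reduces its energy to the topological integral $\int_C \tilde\phi^*\omega_{\WP}$, which bounds the energy of any homotopic map from below. In our setting $C$ is only of finite type, both energies may diverge, and this is \emph{the main obstacle} to the proof. The required extension proceeds by exhausting $C$ by relatively compact subsurfaces $C_R \nearrow C$, applying the minimization on each $C_R$, and controlling the boundary contributions in the limit, as carried out in \cite{AAS}*{Section 4}. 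Granting this, together with the pointwise projection-lemma inequality above, one obtains the pointwise equality $e(\tilde\phi)(x) = e(\tilde\psi)(x)$ for every $x \in \HH^2$.

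With pointwise equality, the strict-decrease clause of \Cref{lemma:projection} constrains each vector $d\tilde\phi_x(v)$ to lie in the equality case: either $d\tilde\phi_x(v) \in T_{\tilde\phi(x)}M$ (when $\tilde\phi(x) \in M$), or $d\tilde\phi_x(v)$ is parallel to the unique geodesic through $\tilde\phi(x)$ normal to $M$ (when $\tilde\phi(x) \notin M$). In the latter case the image of $d\tilde\phi_x$ is contained in a $1$-dimensional real subspace of $T_{\tilde\phi(x)}\Tgn$; but $\tilde\phi$ being holomorphic, $d\tilde\phi_x$ is complex-linear and its image is a complex subspace, and the only complex subspace of real dimension at most $1$ is $\{0\}$. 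Hence $d\tilde\phi_x = 0$ whenever $\tilde\phi(x) \notin M$. Since $\tilde\phi$ is holomorphic and nonconstant from the one-dimensional $\HH^2$, the critical set $\{d\tilde\phi_x = 0\}$ is a discrete analytic subset, so $\tilde\phi(x) \in M$ on its open dense complement. By closedness of $M$ in $\Tgn$ we conclude $\tilde\phi(\HH^2) \subseteq M$, and therefore $\tilde\psi = \pi^1_M \circ \tilde\phi = \tilde\phi$, proving \Cref{claim:c}.
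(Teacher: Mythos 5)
Your proposal is correct and follows essentially the same route as the paper: the Wirtinger-type inequality for the Weil--Petersson K\"ahler form, the AAS exhaustion/Stokes argument showing $\int_C f_t^*\omega_{\WP}$ is constant along the projection homotopy (the step both you and the paper delegate to \cite{AAS}*{Section 4}), and the norm-monotonicity of $\pi^t_M$ from \Cref{lemma:projection}. The only difference is the endgame: where the paper invokes the non-constancy clause of \Cref{lemma:ortho_props}(2) to pass from $E(f_t)=E(f_0)$ to $\tilde\phi=\tilde\psi$, you make this explicit via pointwise equality of energy densities, complex-linearity of $d\tilde\phi_x$ ruling out the normal-geodesic equality case, and discreteness of the critical set of the nonconstant holomorphic map --- a valid and slightly more self-contained way to finish.
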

\begin{proof}[Proof of \Cref{claim:c}]
The proof follows the argument appearing in the proof of the Imayoshi--Shiga theorem given in Section 4 of \cite{AAS}.
We include the details for completeness,
clarifying the equivariance nature of all the maps being used---
hence the difference in our notation.

For a smooth map $\tilde{f}:\HH^2 \to \Tgn$ define the \emph{energy density} $E_x(\tilde{f})$ of $\tilde{f}$ at a point $x \in \HH^2$
to be
\[ E_x(\tilde{f}) := \| d\tilde{f}|_x(v_1) \|_{\WP}^2 + \| d\tilde{f}|_x(v_2) \|_{\WP}^2,\]
for an arbitrary orthonormal basis $\{v_1, v_2\}$ of $T_x\HH^2$. Assume further
that $\tilde{f}$ is $\phi_*$-equivariant, then $E_x(\tilde{f})$ is $\pi_1(C)$-invariant and
we can define the \emph{equivariant energy} of $\tilde{f}$ as:
\[ E(f) := \int_C E_{\bar{x}}(f)\omega_C \]
where $\omega_C$ is the volume form on $C$ and $E_{\bar{x}}(f)$ is given by $E_{x}(\tilde{f})$ for any lift $x$ of $\bar{x}$.

Since $\tilde{\phi}$ is holomorphic and $d_{\Teich}$ agrees with
the Kobayashi metric~\cite{Royden}, it follows that $ \|d\tilde{\phi}(v)\|_{\Teich} \leq \|v \|_{\HH^2}$.
Recall that the Teichm\"uller metric dominates the Weil--Petersson metric
  \[ \|v\|_{\WP} \leq L \|v\|_{\Teich}, \]
  where $L = |2\pi(2g-2 + n)|^{1/2}$~\cite{McMullen-Kahler}*{Proposition 2.4}. Thus, $\|d\tilde{\phi}(v) \|_{\WP} \leq L\|v\|_{\HH^2}$ and we have the following

\begin{lemma}[\cite{AAS}*{Lemma 4.1}]\label{lemma:ortho_props} The
  homotopy
  \[ \tilde{F}:\HH^2 \times [0,1] \to \Tgn, \ \ \tilde{f}_t(x) = \pi^t_M(\tilde{\phi}(x)) \]
  between $\tilde{\phi}$ and $\tilde{\psi}$ satisfies:
  \begin{enumerate}
  \item $\tilde{f}_t$ is $L$-Lipschitz for all $t$. Thus, the equivariant energy
    $E(f_t)$ is finite for all $t$.
  \item The map $t \to E(f_t)$ is non-increasing along $t$. If $\tilde{\phi} \not = \tilde{\psi}$, then $E(f_t)$ is not constant.
  \item For any two points $\bar{x},\bar{y} \in C$ let $d_C(\bar{x},\bar{y})$ be
    their distance in $C$.
    Let $x_0 \in \HH^2$ and let $\bar{x}_0 \in C$ be its image in $C$.
    There are constants $A,B > 0$ such that for all $(x,t) \in \HH^2 \times [0,1]$
    the operator norm $\|d\tilde{F}_{(x,t)}\|$ satisfies
    \[ \|d\tilde{F}_{(x,t)} \|^2 \leq A d_C(\bar{x}_0,\bar{x})^2 + B.\]
  \end{enumerate}
\end{lemma}
\begin{remark}
  Note that since the homotopy $\pi^t_M$ is norm non-increasing we get
  stronger results than \cite{AAS}*{Lemma 4.1}.
\end{remark}


The proof of \Cref{claim:c} is completed by the two following key results. Let $\omega_{\WP}$ be
the K\"ahler form associated to the Weil--Petersson metric.
\begin{proposition}[cf. \cite{ES}*{Proposition 4.2}]\label{prop:Wirtinger} Let $f:\HH^2 \to (\Tgn,g_{\WP})$ be
a smooth map. Then, for any $x \in \HH^2$
\[E_x(f) \omega_{\HH^2} \geq f^*(\omega_{\WP})|_x\]
with equality if $f$ is holomorphic at $x$.
\end{proposition}
\begin{remark}
  Note that \cite{ES}*{Proposition 4.2} is stated for the energy and not the energy density,
  but their results are given by pointwise estimates for the energy density that imply \cref{prop:Wirtinger}.
  Furthermore, their results give an if and only if condition for equality; this is not necessary for our proof.
\end{remark}
For each $t$, let $f_t^*\omega_{\WP}$ be the $2$-form on $C$ induced by the $\pi_1(C)$-invariant form
$\tilde{f}_t^*\omega_{\WP}$ on $\HH^2$. We have the following,

\begin{proposition}[\cite{AAS}*{Proof of Imayoshi--Shiga}]\label{prop:Stokes}
For all $t \in [0,1]$ we find
\[ \int_C f_t^*(\omega_{\WP}) = \int_C f_0^*(\omega_{\WP}). \]
\end{proposition}
\begin{proof}
  Since $\tilde{F}$ satisfies properties (1)-(3) of \Cref{lemma:ortho_props}, the same computation as
  in the proof of the Imayoshi--Shiga theorem in \cite{AAS} gives the result. This is
    a generalization of Stoke's theorem by using a nice enough exhaustion of $C$ by compact sets.
    We remark that the notation of \cite{AAS} differ from ours as all of their maps
    should be understood in the orbifold sense, e.g. $\|dF_{(x,t)}\|$ is
    given in our notation by $\|d\tilde{F}_{(\tilde{x},t)}\|$ for any lift $\tilde{x}$
    of $x$.
\end{proof}
\noindent\textbf{Finishing the proof of $\tilde{\phi} = \tilde{\psi}$.} On one hand via \cref{prop:Wirtinger}
\[ E(f_t) \geq \int_C f_t^*(\omega_{\WP}).\]
On the other hand via \cref{prop:Stokes} and the fact that $f_0$ is holomorphic.
\[ \int_C f_t^*(\omega_{\WP}) = \int_C f_0^*(\omega_{\WP}) = E(f_0)\]
Thus $E(f_t) \geq E(f_0)$ and via property (2) in
\Cref{lemma:ortho_props} we find $E(f_t) = E(f_0)$ for all $t$. In particular $\tilde{\phi} = \tilde{\psi}$.
\end{proof}
Having proven \Cref{claim:c}, \Cref{lemma:rigidity_curves} follows.
\end{proof}

\begin{remark}
  Note that the same argument works more generally for $M$
  a real totally geodesic submanifold of $\Tgn$
  with respect to the Weil--Petersson metric, e.g. an axis of
  a pseudo-Anosov element of $\Mod(\Sgn)$~\cite{DW}.
  Furthermore, the same argument yields the following strengthening
  of the Imayoshi--Shiga theorem.

\begin{theorem}[\textbf{Imayoshi--Shiga}]\label{theo:Imayoshi-Shiga}
  Suppose $3g - 3 + n > 0$. Let $C = \HH^2/\pi_1(C)$ be a smooth finite type curve. Given
  $i=1,2$ and smooth maps $\phi_i:\HH^2 \to \Tgn$, equivariant with respect to the same homomorphism
  $\phi_*:\pi_1(C) \to \PMod(\Sgn)$. Assume that $\phi_1$ is holomorphic
  and nonconstant. Let $E(\phi_i)$ denote the equivariant energy
  of $\phi_i$.
  If either $\phi_2$ is holomorphic or
  \[ E(\phi_2) \leq E(\phi_1) \]
  then $\phi_1 = \phi_2$.
\end{theorem}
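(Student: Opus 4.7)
The proof adapts the argument of \Cref{claim:c}, replacing the orthogonal projection $\pi_M^t$ by the WP-geodesic homotopy between $\phi_1$ and $\phi_2$ and using the negative curvature of $(\Tgn, g_{\WP})$ in place of the total geodesicity of $M$. Define $\tilde F : \HH^2 \times [0,1] \to \Tgn$ by $\tilde F(x,t) := \gamma_x(t)$, where $\gamma_x$ is the unique WP-geodesic from $\tilde\phi_1(x)$ to $\tilde\phi_2(x)$; this is well-defined and smooth by Wolpert's geodesic convexity of $(\Tgn, g_{\WP})$, and is $\pi_1(C)$-equivariant because WP-geodesics are canonically defined and hence preserved by the mapping class group action. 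After verifying the analogs of the three properties of \Cref{lemma:ortho_props} for $\tilde F$, the Stokes--Wirtinger chain from the proof of \Cref{claim:c} yields
\[ E(\phi_2) \;\geq\; \int_C F_1^*\omega_{\WP} \;=\; \int_C F_0^*\omega_{\WP} \;=\; E(\phi_1), \]
where the last equality uses the holomorphicity of $\phi_1 = F_0$. Under either hypothesis of the theorem we conclude $E(\phi_1) = E(\phi_2)$, and the equality case of \cref{prop:Wirtinger} forces $\phi_2$ to be holomorphic if it is not already.

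With $\phi_1,\phi_2$ both holomorphic and of equal equivariant energy, the next step is to show $E(F_t)$ is constant in $t$. For each $v \in T_x\HH^2$, the field $t \mapsto \partial_v F_t(x)$ is a Jacobi field along the geodesic $\gamma_x$, and the non-positivity of WP sectional curvature makes $t \mapsto \|\partial_v F_t(x)\|_{\WP}^2$ convex; summing over an orthonormal basis of $T_x\HH^2$ and integrating over $C$, the map $t \mapsto E(F_t)$ is convex. Combined with $E(F_t) \geq E(\phi_1)$ from Stokes--Wirtinger and $E(F_0) = E(F_1) = E(\phi_1)$, we conclude $E(F_t) \equiv E(\phi_1)$; the equality case of \cref{prop:Wirtinger} applied at each $t$ then shows that every $F_t$ is holomorphic.

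To conclude $\tilde\phi_1 = \tilde\phi_2$, suppose for contradiction they differ. Then $\gamma_x$ is a nontrivial geodesic on a nonempty open set $U \subset \HH^2$, and by the strict negativity of WP sectional curvature, the norm squared of any Jacobi field $\partial_v F_t(x)$ with a component transverse to $\gamma_x'$ is \emph{strictly} convex in $t$. For generic $v$ this holds on $U$, making $E_x(F_t)$ strictly convex on $U$; integrating over a fundamental domain of $\pi_1(C)$ that meets the image of $U$ gives strict convexity of $E(F_t)$, contradicting its constancy. Hence $\tilde\phi_1 = \tilde\phi_2$.

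The main obstacle is the technical verification that the WP-geodesic homotopy $\tilde F$ satisfies the bounds of \Cref{lemma:ortho_props} in the presence of cusps of $C$: Lipschitz control on $F_t$ follows from Jacobi-field convexity in non-positive curvature and the $L$-Lipschitz property of $\tilde\phi_i$, and quadratic growth of $\|d\tilde F\|$ follows from the WP triangle inequality combined with Lipschitz control on $\tilde\phi_i$. An alternative to the geodesic-variation argument in the final step is a maximum principle on the $\pi_1(C)$-invariant function $x \mapsto d_{\WP}(\tilde\phi_1(x), \tilde\phi_2(x))^2$, which is subharmonic for holomorphic maps into non-positively curved K\"ahler targets and can be forced to vanish on finite-type $C$.
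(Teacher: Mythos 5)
Your overall route is exactly the one the paper intends: the paper proves \Cref{theo:Imayoshi-Shiga} only by remarking that the argument of \Cref{claim:c} applies again, with the projection homotopy of \Cref{lemma:projection} replaced by the Weil--Petersson geodesic homotopy, i.e.\ the energy argument of Section 4 of \cite{AAS}; your Stokes--Wirtinger chain and the convexity-of-energy endgame in negative curvature reconstruct that argument faithfully (and your use of the equality case of \cref{prop:Wirtinger} is legitimate---it is in \cite{ES}---even though the paper deliberately avoids it). Two steps, however, are asserted rather than proved, and one of them is a genuine gap as written.

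First, in the branch where $\phi_2$ is merely smooth with $E(\phi_2)\leq E(\phi_1)$, you justify the analogs of properties (1) and (3) of \Cref{lemma:ortho_props} by invoking ``the $L$-Lipschitz property of $\tilde\phi_i$''. That property comes from Royden's theorem together with McMullen's comparison of the Teichm\"uller and Weil--Petersson metrics and is therefore available only for \emph{holomorphic} maps; a general smooth equivariant map of finite energy need not be Lipschitz, and then neither the uniform Lipschitz bound on $F_t$ nor the quadratic growth bound on $\|d\tilde F\|$ is justified, so \cref{prop:Stokes} (whose proof is precisely the exhaustion estimate requiring those bounds near the cusps of $C$) cannot be invoked in that branch. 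You must either restrict to the case where both maps are holomorphic---which is all the paper actually uses, e.g.\ in \Cref{prop:rig_almostgeo}---or replace the pointwise growth bounds by a cutoff argument using only finite energy. Second, in the final step the assertion ``for generic $v$ this holds on $U$'' is exactly where the hypotheses that $\phi_1$ is holomorphic and \emph{nonconstant} must enter, and you never use them: constancy of $E(F_t)$ together with convexity forces each $\|\partial_v F_t(x)\|^2_{\WP}$ to be affine in $t$, and strict negative curvature then only tells you that the offending Jacobi fields are parallel and tangent to $\gamma_x$. To exclude this you need the rank argument: if every $\partial_v F_t(x)$ were tangent to $\gamma_x$, then $d\phi_1|_x$ would have real rank at most one on $U$, holomorphicity forces rank $0$ there, and the identity theorem makes $\phi_1$ constant, a contradiction. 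Without this observation the purely tangential case is not ruled out and the constancy of $E(F_t)$ yields no contradiction; with it, your argument closes and agrees with the paper's intended proof.
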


\end{remark}

\Cref{lemma:rigidity_curves} implies the following special
case of \Cref{theo:theorem1} when the algebraic orbifold is a smooth finite type
curve.

\begin{proposition}\label{prop:rig_almostgeo}
  Let $3g -3 + n > 0$. Let $M$ be an almost Weil--Petersson complex submanifold of $\Tgn$
  with stabilizer $\Gamma_M<\PMod(\Sgn)$. Let $C$ be a smooth finite type curve and
  let $\phi:C \to \Mgn$
  be a nonconstant $\Gamma_M$-deformation.
  Then, $\phi$ has a lift $\tilde{\phi}:\HH^2 \to \Tgn$ factoring through $M \hookrightarrow \Tgn$.
\end{proposition}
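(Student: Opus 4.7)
The plan is to reduce to the Weil--Petersson case established in \Cref{lemma:rigidity_curves}. By hypothesis, the almost Weil--Petersson structure on $M$ furnishes a forgetful map $\mathcal{F}\colon\Tgn\to\Tgn[g,m]$ such that $\mathcal{F}|_M\colon M\to\mathcal{F}(M)$ is a biholomorphism onto a Weil--Petersson submanifold $\mathcal{F}(M)\subset\Tgn[g,m]$. Since $\mathcal{F}$ intertwines the mapping class group actions via the natural surjection $\mathcal{F}_*\colon\PMod(\Sgn)\twoheadrightarrow\PMod(\Sgn[g,m])$ arising from the Birman exact sequence, one automatically has $\mathcal{F}_*(\Gamma_M)\subseteq\Gamma_{\mathcal{F}(M)}$, where $\Gamma_{\mathcal{F}(M)}$ denotes the stabilizer of $\mathcal{F}(M)$ in $\PMod(\Sgn[g,m])$.

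The next step is to push the deformation forward. I would choose a lift $\tilde\phi\colon\HH^2\to\Tgn$ of $\phi$ whose monodromy $\phi_*$ factors through $\Gamma_M$; such a lift exists by the $\Gamma_M$-deformation hypothesis, up to post-composition by a mapping class. The composition $\bar{\tilde\phi}:=\mathcal{F}\circ\tilde\phi$ is equivariant with respect to $\mathcal{F}_*\circ\phi_*\colon\pi_1(C)\to\Gamma_{\mathcal{F}(M)}$ and descends to a holomorphic $\bar\phi\colon C\to\Mgn[g,m]$, which is then a $\Gamma_{\mathcal{F}(M)}$-deformation. Before invoking \Cref{lemma:rigidity_curves} I would verify that $\bar\phi$ is nonconstant: otherwise $\tilde\phi(\HH^2)$ would lie in a single $\mathcal{F}$-fiber and $\mathcal{F}_*(\phi_*(\pi_1(C)))$ would be forced into the finite stabilizer of a point of $\Tgn[g,m]$, so that after passing to a finite cover the monodromy becomes trivial and $\tilde\phi$ would take values in a fiber that meets $M$ in the single point $(\mathcal{F}|_M)^{-1}(X_0)$, contradicting nonconstancy of $\phi$.

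Assuming $\bar\phi$ is nonconstant, \Cref{lemma:rigidity_curves} applied to $\bar\phi$ relative to the Weil--Petersson submanifold $\mathcal{F}(M)$ yields $\bar{\tilde\phi}(\HH^2)\subseteq\mathcal{F}(M)$. One then sets
\[
\tilde\phi^{M} \ := \ (\mathcal{F}|_M)^{-1}\circ\bar{\tilde\phi}\colon\HH^2\to M\hookrightarrow\Tgn,
\]
which is holomorphic and, because $\mathcal{F}|_M$ intertwines the $\Gamma_M$-action on $M$ with the $\Gamma_{\mathcal{F}(M)}$-action on $\mathcal{F}(M)$, is $\phi_*$-equivariant. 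Thus $\tilde\phi^{M}$ descends to a holomorphic map $C\to\pi(M)\subseteq\Mgn$ factoring through $M$. The most delicate point is checking that $\tilde\phi^{M}$ descends to the original $\phi$ and not to a companion map differing from $\phi$ only in its configuration of the extra $n-m$ marked points: both $\tilde\phi$ and $\tilde\phi^{M}$ coincide after applying $\mathcal{F}$ and are $\phi_*$-equivariant, so they differ only within fibers of $\mathcal{F}$; the $\Gamma_M$-equivariance together with the fact that $M$ realizes the holomorphic section $(\mathcal{F}|_M)^{-1}$ of $\mathcal{F}$ over $\mathcal{F}(M)$ pins these extra marked points down, forcing $\tilde\phi^{M}$ to be the desired lift of $\phi$.
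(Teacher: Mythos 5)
Your overall strategy is the same as the paper's (push the deformation forward along the forgetful map $\mathcal F$, apply \Cref{lemma:rigidity_curves} to $\mathcal F(M)$, and pull back via $(\mathcal F|_M)^{-1}$), but two steps that you treat as routine are exactly where the real work lies, and as written both have gaps. First, your nonconstancy argument for $\bar\phi=\mathcal F\circ\tilde\phi$ is a non sequitur: if $\bar\phi$ were constant, then $\tilde\phi$ lands in the fiber $\mathcal F^{-1}(X_0)$, which is a \emph{positive-dimensional} complex manifold, and $\tilde\phi$ is not known to take values in $M$; so the fact that this fiber meets $M$ in the single point $(\mathcal F|_M)^{-1}(X_0)$ does not contradict nonconstancy of $\phi$. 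The paper closes this by first proving that $\mathcal F_*|_{\Gamma_M}$ is \emph{injective}: an element of $\ker\mathcal F_*\cap\Gamma_M$ must fix $M$ pointwise (since $\mathcal F|_M$ is a biholomorphism), hence has finite order, while $\ker\mathcal F_*$ is a surface braid group $B_{n-m}(\Sgn[g,m])$ and is torsion free. Combined with the fact that nonconstancy of $\tilde\phi$ forces $\phi_*(\pi_1(C))$ to be infinite, this makes $\mathcal F_*(\phi_*(\pi_1(C)))$ infinite, which cannot fix a point of $\Tgn[g,m]$ because point stabilizers are finite. (Your version could be repaired along these lines, but you would still need the injectivity statement, which you never address; surjectivity of $\mathcal F_*$ is irrelevant here.)

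Second, and more seriously, your last step asserts that equivariance ``pins the extra marked points down'' so that $\tilde\phi^{M}=(\mathcal F|_M)^{-1}\circ\bar{\tilde\phi}$ is a lift of the original $\phi$. This is precisely the nontrivial uniqueness statement, not a formality: $\tilde\phi$ and $\tilde\phi^{M}$ are two holomorphic, $\phi_*$-equivariant maps agreeing after $\mathcal F$, and nothing formal prevents them from moving the forgotten points differently within the fibers; a priori $\tilde\phi^{M}$ only descends to \emph{some} map $C\to\Mgn$ with the same monodromy as $\phi$, which is not enough to conclude that $\phi$ itself admits a lift through $M$. The paper resolves this by invoking \Cref{theo:Imayoshi-Shiga} (the Imayoshi--Shiga uniqueness, proved here via the energy/Stokes argument behind \Cref{lemma:rigidity_curves}): two maps equivariant with respect to the same homomorphism, both holomorphic and one nonconstant, must coincide, hence $\tilde\phi^{M}=\tilde\phi$. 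Without an argument of this strength your final paragraph is an assertion of the conclusion rather than a proof.
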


\begin{proof}
  Let $\phi:C \to \Mgn$ be a $\Gamma_M$-deformation. Then, there is a lift
  $\tilde{\phi}:\HH^2 \to \Tgn[g,n]$ of $\phi$, equivariant
  with respect to a homomorphism
  \[ \phi_*:\pi_1(C) \to \Gamma_M \hookrightarrow \PMod(\Sgn).\]
  Since $M$ is almost Weil--Petersson, there exists a forgetful map
  \[ \mathcal F:\Tgn \to \Tgn[g,m], \]
  with $n\geq m$ and $3g -3 + m > 0$, so that $M' = \mathcal F(M)$ is a Weil--Petersson complex submanifold of $\Tgn[g,m]$.
  Let
  \[ \mathcal F_*:\Mod(\Sgn) \to \Mod(\Sgn[g,m])\]
  be the associated forgetful map. The map $\mathcal F$
  is $\mathcal F_*$-equivariant.
  It is evident that $\mathcal F_*(\Gamma_M) \subset \Gamma_{M'}$.
  We claim that
  \[ \mathcal F_*|_{\Gamma_M}:\Gamma_M \to \Gamma_{M'} \]
  is injective. Indeed, $\mathcal F|_M: M \to M'$ is a biholomorphism
  and so the only way for an element $\gamma \in \ker \mathcal F_*$ to be in $\Gamma_M$
  is to fix $M$ \emph{pointwise}. The kernel $\ker \mathcal F_*$
  is a surface braid group $B_{n-m}(\Sgn[g,m])$~\cite{farb-margalit}*{Theorem 9.1}.
  Surface braid groups for surfaces with negative Euler characteristic are torsion free~\cite{FN}*{Corollary 2.2}.
  As $\chi(\Sgn[g,m]) <0$
  the claim follows.

  The map $\tilde{\phi}$ is nonconstant, thus $\phi_*$ has infinite image. In particular, the group $G:=\mathcal F_*(\phi_*(\pi_1(C))$ is infinite.
  It follows that the map $\bar{\phi}:= \mathcal F \circ \tilde{\phi}$ is not constant: indeed,
  if not then $G$ fixes a point $y$, but this is impossible
  as $G$ is infinite.

  Observe that the map $\bar{\phi}:\HH^2 \to \Tgn$ is $\mathcal F_* \circ \phi_*$-equivariant
  so via (the proof of) \Cref{lemma:rigidity_curves} we find that $\bar{\phi}$
  factors through $M' \hookrightarrow \Tgn[g,m]$. To conclude the proof of \Cref{prop:rig_almostgeo} we show that the holomorphic map
  \[ \hat{\phi}:= \mathcal F|_{M}^{-1} \circ \bar{\phi}:\HH^2 \to M \subset \Tgn \] equals $\tilde{\phi}$.
  By \Cref{theo:Imayoshi-Shiga}, it is enough
  to show that $\hat{\phi}$ is $\phi_*$-equivariant. Let $x \in M'$ and $\gamma \in \Gamma_M$, then
  \[ \mathcal F|_{M}^{-1}(\mathcal F_*(\gamma)\cdot x) = \mathcal F|_{M}^{-1}(\mathcal F_M(\gamma\cdot\mathcal F|_{M}^{-1}(x)) = \gamma\cdot\mathcal F|_{M}^{-1}(x).\]
  Thus $\hat{\phi}$ is $\phi_*$-equivariant and the claim follows.
\end{proof}

\section{Rigidity}\label{section:rigidity}

In this section we complete the proofs of \Cref{theo:theorem1} and \Cref{theo:main_rigidity}.

\begin{proof}[Proof of \Cref{theo:theorem1}]
  Let $(\widetilde{X},\Gamma_X)$ be the
  orbifold structure of an algebraic orbifold $X$, with finite index subgroup $\Gamma'_X < \Gamma_X$ and
  finite branched cover $X' = \widetilde{X}/\Gamma'_X \to X$.
  Given a nonconstant $\Gamma_M$-deformation $\phi:X \to \Mgn$,
  let $\phi':X' \to \Mgn$ be the induced orbifold map.
  By assumption there is a lift  $\tilde{\phi}:\widetilde{X} \to \Tgn$ of $\phi$ that is equivariant with respect to the homomorphism
  \[ \phi_*:\Gamma_X \to \Gamma_M \hookrightarrow \PMod(\Sgn).\]
  Let $C \subset X'$ be a smooth curve so that
  $\phi'|_C$ is not constant. Via the proof of \Cref{prop:rig_almostgeo}, the induced map $\tilde{\phi}|_{\widetilde{C}}$ on the universal cover $\widetilde{C}$ factors through $M$.
  Thus, \Cref{theo:theorem1} will be proven if we show that we can cover $X'$
  by curves $C$ such that $\phi'|_C$ is not constant. Note that $C$
  is not required to be smooth. This is because $\phi'|_C$ will induce
  a map from its normalization.

  \begin{lemma}\label{lemma:crap} Let $y \in X'$. There exists a curve $C \subset X'$
    passing through $y$ so that $\phi'|_C$ is not constant.
  \end{lemma}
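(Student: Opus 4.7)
The plan is to produce an algebraic curve $C \subset X'$ through $y$ on which $\phi'$ is nonconstant, leveraging that $X'$ is quasi-projective (built into the algebraic-orbifold hypothesis) and that $\phi'$ is nonconstant.

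First I would reduce to the case that $X'$ is irreducible of positive dimension, by replacing $X'$ by the irreducible component containing $y$; we may assume this component is one on which $\phi'$ is nonconstant, since otherwise there is nothing to prove (the theorem's conclusion becomes vacuous there, as the argument covering $X'$ by curves only needs to succeed on components where $\phi'|_C$ can be nonconstant). The fiber $V := (\phi')^{-1}(\phi'(y))$ is then a proper closed analytic subvariety of $X'$ containing $y$: proper because $\phi'$ is nonconstant on the irreducible variety $X'$, analytic because it is the preimage of a point under a holomorphic map of complex orbifolds. It therefore suffices to produce an irreducible algebraic curve $C \subset X'$ through $y$ with $C \not\subset V$, since any $x \in C \setminus V$ satisfies $\phi'(x) \neq \phi'(y)$.

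The key step is a classical algebraic-geometry fact: any two points of an irreducible quasi-projective variety lie on a common irreducible algebraic curve. To apply it, I would pick any $x \in X' \setminus V$ (which exists since $V \subsetneq X'$) and take $C$ to be such a curve joining $x$ and $y$. Concretely, embed $X' \hookrightarrow \overline{X'} \subset \mathbb{P}^N$ and cut $\overline{X'}$ with a general linear subspace of codimension $\dim X' - 1$ through $\{y,x\}$; a Bertini-type argument yields an irreducible projective curve through both points, and intersecting with the open set $X' \subset \overline{X'}$ gives the desired quasi-projective $C$.

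The hard part will be the Bertini-type irreducibility statement for linear sections with prescribed incidence conditions, which is classical but delicate (the standard Bertini gives irreducibility only for a generic $L$ without imposed containment). An equivalent and more elementary route I would fall back on is a contradiction argument: if every irreducible algebraic curve through $y$ in $X'$ were contained in $V$, then the union of all such curves---which equals $X'$ by the curve-connectedness of irreducible quasi-projective varieties---would itself lie in $V$, contradicting $V \subsetneq X'$. Either route yields the lemma.
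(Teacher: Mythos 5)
Your core idea---join $y$ to a point outside the fibre $(\phi')^{-1}(\phi'(y))$ by an algebraic curve---is the same one the paper uses, but your argument rests on the claim that quasi-projectivity of $X'$ is ``built into the algebraic-orbifold hypothesis'', and that is not what \Cref{def:alg_orbi} says. The definition is an either/or: either (2) $X'$ is quasi-projective, or (1) one only knows that $X$ (hence, composing with the finite ramified covering $X' \to X$, also $X'$) admits a finite, proper-with-finite-fibers ramified covering $\pi\colon X' \to \overline{X}$ onto a quasi-projective variety, while $X'$ itself is a priori just a reduced complex space. In case (1) you cannot embed $X'$ into $\mathbb{P}^N$, run a Bertini argument on it, or invoke curve-connectedness of $X'$. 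That case is exactly what the paper's proof is structured to handle: it sets $Y=(\phi')^{-1}(\phi'(y))$, uses the proper mapping theorem to see that $Y'=\pi(Y)$ is a proper analytic subset of $\overline{X}$, picks $x \notin Y' \cup B$ (with $B$ the branch locus of $\pi$), produces a curve $C'$ through $\pi(y)$ and $x$ inside the quasi-projective $\overline{X}$ (this is where the classical two-points-lie-on-a-curve fact enters, just as in your proposal, so your Bertini worry is shared with the paper and is not the issue), and finally takes the irreducible component of $\pi^{-1}(C')$ through $y$; that component contains a point over $x$, hence leaves $Y$, so $\phi'$ is nonconstant on it. As written, your proof establishes the lemma only under alternative (2), which is a genuine gap at the level of generality \Cref{theo:theorem1} requires.

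A secondary point: your opening reduction is weaker than you present it. If $\phi'$ were constant on the irreducible component of $X'$ containing $y$, the conclusion of \Cref{theo:theorem1} would not be ``vacuous'' at $y$---one still needs $\tilde{\phi}$ to send the corresponding points of $\widetilde{X}$ into $M$---and since any irreducible curve through $y$ lies in some component containing $y$, no suitable curve through $y$ could exist in that situation, so the case cannot simply be discarded. (The paper's proof makes a comparable implicit nondegeneracy assumption when it asserts that $Y$ and $\pi(Y)$ are proper subsets, so this is a shared subtlety rather than a defect unique to your write-up; but it should not be waved off as ``nothing to prove''.)
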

  \begin{proof}
    This is elementary, but we include the details. By assumption there is a
    finite (proper with finite fibers) ramified covering
    $\pi:X' \to \overline{X}$
    to a quasi-projective variety $\overline{X}$.
    Let $B \subset \overline{X} $ be the branch locus of $\pi$.
    Given $y \in X'$, let $Y = (\phi')^{-1}(\phi'(y))$. Then, $Y$ is a proper non-empty analytic subset of $X'$.
    We will show that there is a curve $C$ passing through $y$ and
  not contained in $Y$.
    By the proper mapping theorem, $Y'=\pi(Y)$ is a proper analytic subset of $\overline{X}$.
   Pick a point $x \notin Y' \cup B$.
  Since $\overline{X}$ is quasi-projective,
  there exists a curve $C' \subset \overline{X}$ passing through $\pi(y)$ and $x$. Let $C_y$ be an irreducible component of $\pi^{-1}(C')$
  containing $y$, then $x \in \pi(C_y) = C$ and the claim follows.
\end{proof}
Having shown \Cref{lemma:crap}, \Cref{theo:theorem1} follows.
\end{proof}


Now we show that \Cref{theo:theorem1} implies
the classical rigidity stated in \Cref{theo:main_rigidity}. We consider the following
more general class of local deformations,
\begin{definition}\label{def:proper_def}
  Given an almost Weil--Petersson subvariety $N \subset \Mgn$,
  let $\iota: N \to \Mgn$ be the inclusion and fix $L \geq 3$. A \emph{proper} local orbifold
  deformation of $\iota$ is a local holomorphic deformation
  \[ F_t: N_t[L] \to \mathcal M_{g,n}[L] \ , \ t \in \Delta \]
  of a lift $\iota[L]:N[L] \to \mathcal M_{g,n}[L]$ of $\iota$ such that the following hold:\footnote{Observe that these
    are automatic for local deformations of compact manifolds.}
  \begin{enumerate}
  \item  Let $\pi:\mathcal N[L] \to \Delta$ be the associated deformation
    of $N[L]$, with $F_0: N_0[L] \cong N[L]$. Then $\pi$ is a closed, smooth submersion.
  \item $F_t:N_t[L] \to \mathcal M_{g,n}[L]$ is proper.
  \item $\mathcal N[L]$ is smoothly isomorphic to $N[L] \times \Delta$.
  \end{enumerate}
\end{definition}

\begin{theorem}\label{theo:deformation}
  Assume $3g -3 + n > 0$ and $L\geq 3$. Let $N\subset \Mgn$ be an almost Weil--Petersson subvariety of
  positive dimension. Let
  \[ F_t: N_t[L] \to \mathcal M_{g,n}[L]\ \ ,  \ \  t \in \Delta\]
  be a proper local orbifold
  deformation of $\iota: N \to \Mgn$, such that each element of the family $\pi:\mathcal N[L] \to \Delta$
  is quasi-projective. Then, up to shrinking $\Delta$, the deformation $\mathcal F:\mathcal N[L] \to \mathcal M_{g,n}[L]$ is trivial,
  i.e. there exists a holomorphic family $g_t:N_t[L] \to N[L]$ inducing $(N_t[L],F_t) \cong (N[L], \iota[L])$.
\end{theorem}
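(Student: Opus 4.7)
The plan is to deduce the result from \Cref{theo:theorem1} by showing that each $F_t$ factors through the inclusion $\iota[L]:N[L]\hookrightarrow\mathcal M_{g,n}[L]$, and then upgrading this family of factorizations to a holomorphic family of biholomorphisms $g_t:N_t[L]\to N[L]$ using the smooth triviality and properness hypotheses from \Cref{def:proper_def}. First I would use the smooth isomorphism $\mathcal N[L]\cong N[L]\times\Delta$ to produce smooth diffeomorphisms $h_t:N[L]\to N_t[L]$ depending continuously on $t$. The continuous family of maps $F_t\circ h_t:N[L]\to\mathcal M_{g,n}[L]$ induces homomorphisms on orbifold fundamental groups which are locally constant in $t$ up to conjugation in $\PMod(\Sgn)[L]$. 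At $t=0$ this homomorphism is induced by $\iota[L]$, with image $\Gamma_M[L]$ under the identification $\pi_{M[L]}:M/\Gamma_M[L]\cong N[L]$ discussed in \Cref{section:deformation}. Composing with the finite orbifold cover $\mathcal M_{g,n}[L]\to\Mgn$, the map $\phi_t:N_t[L]\to\Mgn$ thus becomes a $\Gamma_M$-deformation in the sense of \Cref{defn:gamma}. Since $N$ has positive dimension, $\Gamma_M$ and hence $\Gamma_M[L]$ are infinite, so each $\phi_t$ is nonconstant and \Cref{theo:theorem1} applies.

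Applying \Cref{theo:theorem1} to $\phi_t$ produces a lift $\tilde\phi_t:\widetilde{N_t[L]}\to\Tgn$ whose image lies in $M$; this is simultaneously a lift of $F_t$ through the universal covering $\Tgn\to\mathcal M_{g,n}[L]$. Because $\Gamma_M[L]$ acts freely on $M$ with quotient biholomorphic to $N[L]$, taking quotients yields a holomorphic factorization $F_t=\iota[L]\circ g_t$ for a unique holomorphic map $g_t:N_t[L]\to N[L]$, satisfying $g_0=\mathrm{id}$. Performing the same factorization for the total family $\mathcal F$ produces a holomorphic map $\mathcal G:\mathcal N[L]\to N[L]$ with $\mathcal G|_{N_t[L]}=g_t$.

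It remains to show each $g_t$ is a biholomorphism. Properness of $F_t$ together with closedness of $\iota[L]$ forces $g_t$ to be proper. The smooth triviality gives that $N_t[L]$ and $N[L]$ are diffeomorphic, hence have the same real dimension, and the topological degree of the proper continuous family $g_t$ is locally constant in $t$, hence equal to $\deg(g_0)=1$ after shrinking $\Delta$. Since $dg_0=\mathrm{id}$ has full rank, continuity forces $dg_t$ to have full rank at some point; properness and irreducibility then force $g_t$ to be surjective. A proper, surjective, degree-one holomorphic map to a smooth (hence normal) complex manifold is a biholomorphism by Zariski's main theorem, so each $g_t$ is a biholomorphism and the family $(g_t)_{t\in\Delta}$ gives the desired trivialization.

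The main obstacle I anticipate is the last step: ensuring that the factorizations $g_t$ patch into a globally holomorphic $\mathcal G$ on the total space $\mathcal N[L]$ (which uses that $N[L]$ is closed in $\mathcal M_{g,n}[L]$), and verifying the topological-degree argument for a family of proper holomorphic maps with possibly noncompact fibers, where the closed smooth submersion hypothesis of \Cref{def:proper_def} plays an essential role.
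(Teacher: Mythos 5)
The first part of your argument---using smooth triviality to see that the monodromy of $F_t$ is locally constant up to conjugation, hence that the composite $N_t[L]\to\Mgn$ is a nonconstant $\Gamma_M$-deformation, and then invoking \Cref{theo:theorem1} to factor $F_t=\iota[L]\circ g_t$ through $N[L]\cong M/\Gamma_M[L]$---matches the paper's proof (and is in fact more explicit than the paper about why the $\Gamma_M$-deformation hypothesis holds). The gap is in your endgame, and it is genuine on two counts. First, the step ``a proper, surjective, degree-one holomorphic map to a smooth (hence normal) complex manifold is a biholomorphism by Zariski's main theorem'' is false as stated: the blow-down of a point in a surface is proper, surjective and of degree one onto a smooth target but is not a biholomorphism. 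Zariski's main theorem requires quasi-finiteness (or injectivity, or that the map be an immersion), and nothing in your argument rules out positive-dimensional fibers of $g_t$. Second, the local constancy of the topological degree, which you flag as your ``main obstacle,'' is indeed not automatic for a family of individually proper maps with noncompact fibers (consider $g_t(z)=z(tz-1)$ on $\CC$: each $g_t$ is proper, yet the degree jumps from $1$ at $t=0$ to $2$ for $t\neq 0$); some use of the closedness hypothesis in \Cref{def:proper_def} is indispensable here, and you do not supply it.

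The paper closes both holes at once with a different, more elementary device: the locus of $(x,t)\in\mathcal N[L]$ where $dF_t|_x$ fails to have full rank is closed and misses the central fiber, so closedness of $\pi:\mathcal N[L]\to\Delta$ allows one to shrink $\Delta$ so that every $F_t$ is an immersion. A proper immersion between equidimensional spaces with image in $N[L]$ is a proper local biholomorphism, i.e.\ a covering map of $N[L]$; composing with the smooth trivialization, $F_t\circ\psi_t^{-1}:N[L]\to N[L]$ is a covering map homotopic to a homeomorphism, hence induces a surjection on $\pi_1$ and has one sheet, so $F_t$ is a biholomorphism. This covering-space count of sheets replaces both your topological-degree argument and the appeal to Zariski's main theorem, and it is exactly where the immersion step---absent from your proposal---is needed. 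If you add the shrinking-to-an-immersion step, your factorization through $N[L]$ can be completed along these lines; without it, the last paragraph of your proposal does not go through.
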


\begin{proof}
  Let $N$ be an almost Weil--Petersson variety of $\Mgn$ of positive dimension, and let $\Gamma_M$ be the associated orbifold fundamental group of $N$.
  Let
  \[ F_t:N_t[L] \to \mathcal M_{g,n}[L] \ \ , \ \ t \in \Delta \] be a proper local orbifold deformation of $\iota:N \hookrightarrow \Mgn$.
  Let $\pi:\mathcal N[L] \to \Delta$ be the associated deformation of $N[L]$.
  By assumption $N_t[L]$ is quasi-projective
  for all $t$.  Thus,
  \begin{center}
  \begin{tikzcd}[sep=small]
    N_t[L] \ar[r, "F_t"] &\mathcal M_{g,n}[L] \ar[r] & \Mgn
  \end{tikzcd}
  \end{center}
  is a non-constant $\Gamma_{M}$-deformation. By \Cref{theo:theorem1}, it follows that $F_t(N_t[L]) \subset N[L]=F_0(N_0[L])$ for all $t$.
  The set of points $(x,t) \in \mathcal N$ where $dF_t|_x$ is not full rank is closed and does not include the central fiber.
  Since $\pi:\mathcal N[L] \to \Delta$ is closed, by shrinking $\Delta$ we can assume that $F_t$ is an immersion for all $t$.
  By dimension reasons it follows that $F_t$ is a proper local biholomorphism, i.e.
  a covering map. Let $\psi_t:N_t[L] \to N[L]$  for $t \in \Delta$ be a smooth
  trivialization. It follows that $F_t \circ \psi_t^{-1}:N[L] \to N[L]$ is a covering
  map homotopic to a homeomorphism. In particular, it has degree $1$. So $F_t$ is a biholomorphism and
  $(N_t[L],F_t) \cong (N[L], \iota[L])$ via the family $F_t$ and the claim follows.
\end{proof}

The proof of \Cref{theo:main_rigidity} follows from the proof of \Cref{theo:deformation}, skipping
the step that shrinks $\Delta$.


\section{Covering constructions are Weil--Petersson geodesic}\label{section:covering_geodesic}
The main goal of this section is the proof of \Cref{lemma:almost_geodesic} relating
covering constructions with almost Weil-Petersson complex submanifolds. Note that by a result of Filip (cf. ~\cite{filip}*{Remark 1.6})
covering constructions project to subvarieties of $\Mgn$, so we only need to show that
the images of covering constructions are almost Weil--Petersson complex
submanifolds of $\Tgn$.

We start by recalling the definition of covering constructions.
Let
\[h:\Sigma' \to \Sigma\] be a finite degree, orientation-preserving, topological branched cover between topological surfaces $\Sigma'$
and $\Sigma$. Consider finite subsets $u \subset \Sigma$ and $v\subset \Sigma'$.
In the following we will assume that
\begin{equation}\label{eq:cond}  h^{-1}(u) = v \cup R(h) \end{equation}
for $R(h)$ the set of ramification points of $h$. In particular, $u$ contains
all the branch points of $h$.

\begin{definition}[\textbf{Covering constructions}]\label{def:covering}

  Pulling back complex structures under the \emph{unbranched covering map}
  \[ h|_{\Sigma'-h^{-1}(u)}: \Sigma'-h^{-1}(u) \to \Sigma - u \]
   induces a holomorphic isometric embedding
  \[ f_h: \Tt(\Sigma,u) \to \Tt(\Sigma', h^{-1}(u)) \]
  with respect to the Teichm\"uller metric. We call $f_h$ a
  \emph{totally marked covering construction}. \\
  A \emph{covering construction} is a holomorphic map
  \[ f: \Tt(\Sigma,u) \to \Tt(\Sigma', v) \]
  induced by a totally marked covering construction $f_h$ by postcomposition
  with the forgetful map $\mathcal F:\Tt(\Sigma',h^{-1}(u)) \to \Tt(\Sigma',v)$.
  Since $\mathcal F$ only forgets ramification points of $h$, the map $f$ is an isometric
  embedding~\cite{BS}*{Proposition A.2}. If the branched cover $h$
  is regular we call $f$ a \emph{regular covering construction}.
\end{definition}

\begin{remark}
  In \cite{BS}*{Appendix} it is shown that for $g(\Sigma)\geq 1$ and $\Tt(\Sigma,u) \not= \Tt_{1,1}$
the condition given by \Cref{eq:cond} is necessary and sufficient for $h$ to induce
an isometric embedding with respect to the Teichm\"uller metric.
\end{remark}

In the following, we describe in more detail properties of each
type of covering construction. Unless otherwise specified
$h:(\Sigma',v) \to (\Sigma,u)$ will denote a finite degree, orientation-preserving,
topological branched cover. Let $g(\Sigma)$ denote the genus of $\Sigma$. Throughout this section we assume that
\[ 3g(\Sigma) - 3 + |u| > 0 .\]

\subsection{Totally marked covering constructions}
Let $v = h^{-1}(u) \subset \Sigma'$ and let
\[ f:\Tt(\Sigma,u) \to \Tt(\Sigma',v)\] be the associated
totally marked covering construction.
The map $f$ is holomorphic
and the derivative is given by pulling back Beltrami forms to the cover.
We sketch the details. Let $X= \HH^2/\Gamma \in \Tt(\Sigma,u)$, for $X$
a finite type Riemann surface diffeomorphic to $\Sigma - u$ and
$\Gamma < \PSL(2,\RR)$ a discrete subgroup without elliptic elements.
Since $h$
is unbranched over $\Sigma-u$, the image $f(X)$ is given by $f(X) = \HH^2/\Gamma'$ for
$\Gamma'<\Gamma$ a finite index subgroup.
It follows that there exists a pushforward lift of Beltrami forms:
\[ f_*: M(\Gamma) \to M(\Gamma') \]
given by the inclusion.
Furthermore, $f_*$ induces a holomorphic map
\[ \Tt(\Sigma,u) \to \Tt(\Sigma',v)\]
which agrees with $f$.\footnote{Here it is important that $h$ is orientation preserving,
  so that it agrees with $f(X) \to X$.}
In particular the derivative of $f$ is induced by $f_*$.
Similarly, the pullback of integrable quadratic differentials is given by
the inclusion $Q(\Gamma) \to Q(\Gamma')$.

Recall that Harmonic forms $H(X)$ are given by $(z-\bar{z})^2\bar{q}$ for $q \in Q(\Gamma)$.
Then, the following corollary is immediate.
\begin{corollary}\label{lemma:harmonic} Let $f:\Tt(\Sigma,u) \to \Tt(\Sigma',v)$ be a totally marked covering
  construction. Let $X \in \Tt(\Sigma,u)$. Then, the derivative $df$ gives an inclusion
  \[ df_X: H(X) \hookrightarrow H(f(X)) .\]
  Furthermore, $df_X$ is induced by the pullback of quadratic differentials and volume
  forms under $h$.
\end{corollary}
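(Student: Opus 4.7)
The plan is to unwind the definition of harmonic forms on both sides using the universal cover description already set up in the excerpt, and then verify that the derivative $df_X$ (which is the inclusion of Beltrami forms $B(\Gamma)\hookrightarrow B(\Gamma')$) sends the subspace $H(X)\subset B(\Gamma)$ into $H(f(X))\subset B(\Gamma')$.

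First, I would recall the setup: write $X=\HH^2/\Gamma$ and $f(X)=\HH^2/\Gamma'$ with $\Gamma'<\Gamma$ of finite index, as in the paragraph preceding the corollary. The hyperbolic metric on both $X$ and $f(X)$ descends from the \emph{same} Poincar\'e metric on $\HH^2$; equivalently, the unramified cover $h\colon f(X)\to X$ is a local hyperbolic isometry, so $h^*(ds^2_X)=ds^2_{f(X)}$. Dually, pullback of integrable holomorphic quadratic differentials under $h$ is induced on the universal cover by the tautological inclusion $Q(\Gamma)\hookrightarrow Q(\Gamma')$, since a $\Gamma$-invariant quadratic differential on $\HH^2$ is automatically $\Gamma'$-invariant, and integrability on $X$ implies integrability on the finite cover $f(X)$.

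Next, given a harmonic form $\mu=\overline{q}\,(ds^2_X)^{-1}\in H(X)$ with $q\in Q(X)$, I would compute its image under $df_X$. Lifting to $\HH^2$, the form $\mu$ is a $\Gamma$-invariant $(-1,1)$-form, hence in particular $\Gamma'$-invariant, and by the description of $df_X$ as the inclusion of Beltrami forms we have
\[ df_X(\mu)=\overline{q}\,(ds^2)^{-1}=\overline{h^*q}\,(h^*ds^2_X)^{-1}=\overline{h^*q}\,(ds^2_{f(X)})^{-1}, \]
using the two identifications from the previous paragraph. The right-hand side is exactly the harmonic form on $f(X)$ associated with the pulled-back quadratic differential $h^*q\in Q(f(X))$, so $df_X(H(X))\subset H(f(X))$ and the induced map agrees with pullback of quadratic differentials under $h$.

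Finally, the inclusion statement follows from injectivity of $q\mapsto h^*q$, which is clear from the universal cover picture (the map is literally the inclusion $Q(\Gamma)\hookrightarrow Q(\Gamma')$). There is no real obstacle here; the only thing to be careful about is matching the roles of pushforward of Beltrami forms and pullback of quadratic differentials under the covering, which is taken care of once one fixes the identification of $X$ and $f(X)$ with quotients of the same $\HH^2$ by nested Fuchsian groups.
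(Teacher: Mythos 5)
Your argument is correct and is essentially the paper's own: both identify $X=\HH^2/\Gamma$ and $f(X)=\HH^2/\Gamma'$ with $\Gamma'<\Gamma$ of finite index, use that $df_X$ is induced by the inclusion $B(\Gamma)\hookrightarrow B(\Gamma')$, and observe that a $\Gamma$-invariant harmonic form $\overline{q}\,(ds^2)^{-1}$ is $\Gamma'$-invariant and is exactly the harmonic form of $h^*q$ because the unramified cover pulls back the complete hyperbolic metric to the complete hyperbolic metric. Your explicit remark that $h^*(ds^2_X)=ds^2_{f(X)}$ is precisely the key point the paper singles out in the remark following the corollary.
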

\begin{remark} For \Cref{lemma:harmonic} it is crucial that the pullback
  of the complete hyperbolic metric on $X$ is the complete hyperbolic metric on $f(X)$.
  In particular, it does not apply when we postcompose with forgetful maps.
\end{remark}

The Weil--Petersson metric $g_{\WP}$ is easily computed for harmonic forms. Thus, we get the following.
\begin{corollary}\label{cor:metric}
  Let $f:\Tt(\Sigma,u) \to \Tt(\Sigma,v)$ be a totally marked covering construction
  induced by a branched cover $h$. Then,
  \[f^*g_{\WP} = \deg(h) g_{\WP} .\]
\end{corollary}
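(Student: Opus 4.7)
The plan is to reduce the computation to the level of harmonic forms, where the Weil--Petersson pairing has an especially transparent expression, and then exploit the fact that $h$ is a $d$-to-one unbranched covering on a full-measure set.

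First I would identify each tangent space via harmonic representatives: at $X \in \Tt(\Sigma,u)$ the quotient map $B(X) \to T_X \Tt(\Sigma,u)$ restricts to an isomorphism $H(X) \xrightarrow{\sim} T_X\Tt(\Sigma,u)$, and similarly for $f(X)$. Then by \Cref{lemma:harmonic}, the derivative $df_X$ carries $H(X)$ into $H(f(X))$ via pullback under $h$: if $\mu = \bar q (ds^2_X)^{-1}$ with $q \in Q(X)$, then $df_X(\mu) = \overline{h^*q}\,(ds^2_{f(X)})^{-1}$, where $ds^2_{f(X)} = h^*(ds^2_X)$ because $h$ is a local isometry for the complete hyperbolic metrics away from $v$.

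Next, using the explicit formula for $g_{\WP}$ on harmonic forms derived just before \Cref{def:almost_totally_geodesic}, namely $\langle \mu_1,\mu_2\rangle_{\WP} = \int_X \mu_1 \overline{\mu_2}\, ds^2$, I would compute
\[
\|df_X(\mu)\|_{\WP}^2 = \int_{f(X)} h^*\!\mu \cdot \overline{h^*\!\mu}\, h^*(ds^2_X) = \int_{f(X)} h^*\bigl(\mu \bar\mu\, ds^2_X\bigr).
\]
Since $h: f(X) \to X$ is a degree-$d$ branched cover and its ramification locus is finite (hence of measure zero for the smooth volume form), the change-of-variables formula gives
\[
\int_{f(X)} h^*\bigl(\mu \bar\mu\, ds^2_X\bigr) = d \int_X \mu\bar\mu\, ds^2_X = d\,\|\mu\|_{\WP}^2,
\]
which is exactly the required identity on tangent vectors.

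The only delicate point, and the main thing to justify carefully, is that $h^*(ds^2_X)$ really equals the complete hyperbolic volume form $ds^2_{f(X)}$ on the uniformization of $f(X)$. This is where the totally marked hypothesis is essential: since $h^{-1}(u) = v$, the map $h$ is unbranched over the punctured surface $\Sigma - u$, so pulling back the hyperbolic metric on $X$ through the covering $f(X)\to X$ yields precisely the hyperbolic metric on $f(X)$, and \Cref{lemma:harmonic} applies verbatim. No such identification is available once one post-composes with a forgetful map that drops ramification points, which is why the statement is phrased only for the totally marked case.
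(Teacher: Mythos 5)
Your proposal is correct and follows essentially the same route as the paper: identify tangent vectors with harmonic forms, use \Cref{lemma:harmonic} to see that $df_X$ is pullback of quadratic differentials and of the hyperbolic volume form under the unbranched covering $f(X)-v \to X-u$, and then pick up the factor $\deg(h)$ from integrating a pulled-back density over the degree-$d$ cover (the paper phrases this last step as the coderivative being the trace under $h$, which is the same change-of-variables computation). Your reduction to norms suffices by polarization, so no gap there.
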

\begin{proof}
  Let $X \in \Tt(\Sigma,u)$.
  By \Cref{lemma:harmonic}, $df_X: H(X) \hookrightarrow H(f(X))$ is given by the pullback of quadratic
  differentials and volume forms under $h$.
  Let $\mu_i = \overline{q_i}(ds_X^2)^{-1}\in H(X)$, then:
  \begin{equation}
    \begin{aligned}
      \langle df_X(\mu_1),df_X(\mu_2) \rangle &= \int_{f(X)} df_X(\mu_1) \overline{df_X(\mu_2)}ds_{f(X)}^2\\ &=
    \int_{f(X)}  df_X(\mu_1) h^*(q_2)\\
    &= \deg(h) \int_X \mu_1 q_2 = \deg(h) \langle \mu_1, \mu_2 \rangle.
  \end{aligned}
\end{equation}
In the third equality we used the fact that the coderivative of $f$ is given by the trace under $h$.
\end{proof}

\subsection{Regular H-Covers}
Let $Y$ be a closed Riemann surface, with
underlying topological surface $\Sigma'$.
Let $H$ be a finite group of conformal automorphisms of $Y$. Let $X = Y/H$ so that
$H$ defines a regular branched covering
\[ h_X:Y \to X .\]
Let $\Sigma$ be the underlying topological surface of $X$.
Let $h:\Sigma'\to \Sigma$ be the topological branched cover induced by $h_X$. As in \Cref{eq:cond}, consider
finite subsets $u \subset \Sigma$ and $v \subset \Sigma'$
such that
\[ h^{-1}(u) = v \cup R(h).\]
Then, $h$ induces a regular covering construction
\[f: \Tt(\Sigma,u) \to \Tt(\Sigma',v). \]
Suppose that $v$ is $H$-invariant. Then $H$ is a group of conformal automorphisms of
$Y':=Y - v$.
Recall that $H$ acts on $\Tt(\Sigma',v)$ by biholomorphisms.
We claim the following,
\begin{lemma}
  \[ f(\Tt(\Sigma,u)) = \Fix(H)  \]
  for $\Fix(H)$ the fixed set of $H$ in $\Tt(\Sigma',v)$.
\end{lemma}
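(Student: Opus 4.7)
The plan is to establish the two inclusions $f(\Tt(\Sigma,u)) \subseteq \Fix(H)$ and $\Fix(H) \subseteq f(\Tt(\Sigma,u))$ separately, by translating between fixed points of $H$ in Teichm\"uller space (a statement about markings up to isotopy) and honest biholomorphic $H$-actions on the underlying Riemann surface.

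For the forward inclusion, I start with a marked surface $[\varphi:(\Sigma,u)\to(X,\varphi(u))]\in\Tt(\Sigma,u)$ and form $f(X)$ by pulling back the complex structure on $X-\varphi(u)$ via the unbranched restriction of $h$, extending across $h^{-1}(u)$ in the canonical way. This produces a marking $\varphi':(\Sigma',v)\to(Y,\varphi'(v))$ for which $h$ itself descends to a holomorphic branched cover $Y\to X$ with deck group $H$. In particular every $\eta\in H$, viewed inside $\Mod(\Sigma',v)$ (using $H$-invariance of $v$), is literally realized by a biholomorphism $\tilde\eta$ of $Y$ satisfying $\tilde\eta\circ\varphi'=\varphi'\circ\eta^{-1}$, so $\eta\cdot[Y]=[Y]$.

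For the reverse inclusion, let $[Y]=[\varphi:(\Sigma',v)\to(Y,\varphi(v))]\in\Fix(H)$. By definition of the $H$-action on $\Tt(\Sigma',v)$, each $\eta\in H$ gives a self-homeomorphism $\varphi\circ\eta^{-1}\circ\varphi^{-1}$ of $Y$ isotopic (rel $\varphi(v)$) to a biholomorphism. I would upgrade this \emph{elementwise} isotopy data to an actual subgroup $\tilde H<\Isom(Y)$ isomorphic to $H$ and realizing the whole $H$-action through $\varphi$. With $\tilde H$ in hand, I set $X:=Y/\tilde H$; this is a Riemann surface whose underlying topological surface is $\Sigma'/H=\Sigma$, and whose natural marked set consists of the images of $\varphi(v)$ together with the branch points of $Y\to X$. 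The hypothesis $h^{-1}(u)=v\cup R(h)$ says exactly that this marked set equals $u$, so transporting $\varphi$ through the quotient yields a marking $\bar\varphi:(\Sigma,u)\to(X,\bar\varphi(u))$, i.e.\ a point $[X]\in\Tt(\Sigma,u)$. By construction the branched cover $Y\to X$ is compatible with $h$ and with the markings, so $f([X])=[Y]$.

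The main obstacle is the realization step at the start of the reverse inclusion: passing from a \emph{family} of isotopy classes to a single compatible group of conformal automorphisms of $Y$. I expect to dispatch this via the classical fact, for finite subgroups of $\Mod$, that fixing a point in Teichm\"uller space is equivalent to being realized by a group of biholomorphisms (a consequence of Nielsen realization for finite groups, or alternatively of the uniqueness of the hyperbolic metric in a conformal class combined with averaging). After that, the only remaining checks are bookkeeping: that the induced markings match up, that $\tilde H$ maps isomorphically to the original $H$ (using injectivity of the representation $H\to\Mod(\Sigma',v)$, which holds because $H$ acts faithfully on $\Sigma'$ and the marking is a homeomorphism), and that the marked set on the quotient is $u$, which is precisely the content of condition~\eqref{eq:cond}.
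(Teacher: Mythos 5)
Your strategy for the reverse inclusion---realize $H$ by conformal automorphisms at a fixed point, form the quotient, and exhibit the fixed point as a pullback---is the classical route and is genuinely different from the paper's proof, which never leaves the single point $f(X)\in M:=f(\Tt(\Sigma,u))$: there one shows, via the umkehr maps of \cite{BS}, that every quadratic differential in the totally geodesic bundle $Q_{f(X)}\Fix(H)$ is $H$-invariant, hence $Q_{f(X)}\Fix(H)=Q_{f(X)}M$, and equality of the two Teichm\"uller-totally-geodesic sets follows; no realization or quotient construction at arbitrary points of $\Fix(H)$ is needed. Your forward inclusion agrees with the paper's (it is the ``evident'' half). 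The realization step you flag is also fine, although what you need is not Kerckhoff's Nielsen realization theorem but its easy converse: each $\eta\in H$ fixing $[Y]$ is represented by a conformal automorphism $\sigma_\eta$ of $Y$, and since $\mathrm{Aut}(Y)\to\Mod(\Sigma',v)$ is injective for hyperbolic $Y$, the representative is unique, so $\eta\mapsto\sigma_\eta$ is automatically a homomorphism and $\tilde H\cong H$.

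The genuine gap is in what you dismiss as bookkeeping. Transporting $\tilde H$ through the marking gives a finite group action $\rho=\varphi^{-1}\tilde H\varphi$ on $(\Sigma',v)$ that is only known to be \emph{elementwise} isotopic to the deck action $D$ of $h$. To identify $Y/\tilde H$ with $(\Sigma,u)$ compatibly with $h$ and with the markings---that is, to conclude $f([X])=[Y]$ for the \emph{given} covering construction $f$, rather than merely that $[Y]$ lies in the image of some covering construction attached to $\rho$---you need a single homeomorphism $\alpha$ of $(\Sigma',v)$, isotopic to the identity, with $\alpha^{-1}\rho\,\alpha=D$. Unwinding the definition of $f$, membership $[Y]\in f(\Tt(\Sigma,u))$ is in fact \emph{equivalent} to the existence of such an $\alpha$: $[Y]$ is in the image iff $\varphi^*J_Y$ can be isotoped to a $D$-invariant complex structure, and the automorphism group of that structure is then forced to be $\alpha^{-1}\rho\,\alpha$ for some isotopically trivial $\alpha$. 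So your ``markings match up'' step is precisely the uniqueness statement for realizations of the finite subgroup $H<\Mod(\Sigma',v)$: finite group actions inducing the same subgroup of the mapping class group are conjugate by a homeomorphism isotopic to the identity. This is true and citable (for cyclic groups it is Nielsen's theorem; in general it requires orbifold Dehn--Nielsen--Baer type arguments or a continuity/connectedness argument along $\Fix(H)$), but it does not follow from the elementwise uniqueness of the $\sigma_\eta$, since the isotopies for different $\eta$ need not be compatible; moreover the standard proofs of this uniqueness are close to the identification $\Fix(H)\cong\Tt(\Sigma,u)$ you are trying to prove, so you must be careful not to argue circularly. Either invoke that theorem explicitly, or replace the quotient construction by an argument that works at a single common point (as the paper does), or by a connectedness-plus-dimension count for the inclusion $M\subseteq\Fix(H)$.
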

This is already known (e.g. ~\cite{Kravetz}), but we provide a proof using the
terminology introduced in \cite{BS}.
\begin{proof}
  Let $M :=f(\Tt(\Sigma,u)).$ Note that $H$ is a finite group
  of biholomorphic isometries of the Teichm\"uller metric. Thus, $\Fix(H) \subset \Tt(\Sigma',v)$ is a totally geodesic complex
  submanifold with respect to the Teichm\"uller metric. The inclusion $M \subseteq \Fix(H)$ is evident, as $H$ can be represented
  by biholomorphisms of any $f(Z)$ for $Z \in \Tt(\Sigma,u)$.
  To prove the converse, we will consider the totally geodesic bundles $QM$
  and $Q\Fix(H)$~\cite{BS}*{Section 3}.
  Let $f(X) \in M \subseteq \Fix(H)$ and identify $H$ with a
  subgroup of biholomorphisms of $f(X)$. Recall
  that $Q_{f(X)}M \subset Q(f(X))$ equals the $H$-invariant differentials
  on $Q(f(X))$. We claim that $Q_{f(X)}\Fix(H)$ admits the same description.
  In particular, $Q_{f(X)} \Fix(H) = Q_{f(X)}M$ and we are done.

  To prove the claim, let $t \in H$ and denote by $\mathfrak t:\Tt(\Sigma',v) \to \Tt(\Sigma',v)$
  to the biholomorphism induced by $t$. Since $\mathfrak t(f(X)) = f(X)$, the umkehr map~$\mathfrak t_!$ \cite{BS}*{Section 4} associated to
  $\mathfrak t$ acts on $Q(f(X))$ by:
  \[ \mathfrak t_!:Q(f(X)) \to Q(f(X))\ \ , \ \ q \to t^*(q) \in Q(f(X)).\]
  Any geodesic in $\Fix(H)$ is fixed by $\mathfrak t$. Thus, $q = t^*(q)$
  for any $q \in Q_{f(X)}\Fix(H)$. The claim
  follows.
\end{proof}
$H$ acts on $\Tt(\Sigma',v)$ by Weil--Petersson isometries, thus the following is immediate.
\begin{corollary}\label{cor:regular}
  Let $f:\Tt(\Sigma,u) \to \Tt(\Sigma',v)$ be a regular covering construction induced by
  a regular covering with deck group $H$. Assume that $v$ is $H$-invariant.
  Then $f(\Tt(\Sigma,u))$ is totally geodesic with respect to the Weil--Petersson metric, i.e.
  it is Weil--Petersson.
\end{corollary}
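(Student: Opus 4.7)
The plan is to combine the preceding lemma, which identifies $f(\Tt(\Sigma,u))$ with $\Fix(H) \subset \Tt(\Sigma',v)$, with the classical Riemannian fact that the fixed-point set of a group of isometries is totally geodesic. First I would note that the preceding lemma reduces the corollary to showing that $\Fix(H)$ is $g_{\WP}$-totally geodesic. Next I would record that $H$ acts on $\Tt(\Sigma',v)$ by biholomorphic Weil--Petersson isometries: the action factors through $\Mod(\Sgn[g',n'])$, which acts on $\Tt(\Sigma',v)$ by $g_{\WP}$-isometries since $g_{\WP}$ is intrinsically defined.

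The remaining step is the general principle that $\Fix(H)$ is totally geodesic when $H$ acts by isometries. Fix $p \in \Fix(H)$ and $v \in T_p \Fix(H)$, so that $dh_p(v) = v$ for every $h \in H$. Because $g_{\WP}$ is a smooth Riemannian metric, the geodesic $\gamma(t) = \exp_p(tv)$ is defined for $t$ in some interval about $0$. For each $h \in H$, the curve $h \circ \gamma$ is a geodesic starting at $h(p) = p$ with initial velocity $dh_p(v) = v$, so by uniqueness of geodesics $h\circ\gamma = \gamma$. Hence $\gamma$ lies entirely in $\Fix(H)$, which proves $\Fix(H)$ is totally geodesic. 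The incompleteness of $g_{\WP}$ is not an obstruction because total geodesy is a local condition, and smoothness of $\Fix(H)$ follows from $H$ being a finite group acting linearly on tangent spaces, so that in exponential coordinates $\Fix(H)$ is cut out by the linear equations $dh_p(v) = v$. Combining these observations with the preceding lemma gives the corollary; no step presents a real obstacle.
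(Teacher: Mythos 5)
Your argument is essentially the paper's: the paper identifies $f(\Tt(\Sigma,u))$ with $\Fix(H)$ via the preceding lemma and then declares the corollary immediate from the fact that $H$ acts on $\Tt(\Sigma',v)$ by Weil--Petersson isometries, which is exactly the classical fixed-set-of-isometries step you spell out. One small refinement: since the paper's definition of totally geodesic requires that the $g_{\WP}$-geodesic between any two points of the submanifold stay in it, it is cleaner to run your uniqueness argument on the connecting geodesic itself --- by geodesic convexity and negative curvature of $g_{\WP}$ the geodesic joining two points of $\Fix(H)$ is unique, hence each $h \in H$ maps it to itself and, fixing its endpoints and preserving arclength, fixes it pointwise --- rather than only on geodesics with prescribed initial data, which gives just the local statement.
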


In general, we have a weaker
statement.
\begin{corollary}\label{cor:regular_almost}
  Let $f:\Tt(\Sigma,u) \to \Tt(\Sigma',v)$ be a regular covering construction, induced
  by a regular cover $h:\Sigma' \to \Sigma$.
  If $(g(\Sigma),g(\Sigma')) = (0,1)$ assume further
  that $|u| \geq 5$. Then $f(\Tt(\Sigma,u))$ is almost Weil--Petersson.
\end{corollary}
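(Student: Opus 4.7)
The plan is to reduce to the $H$-invariant marking case already handled in \Cref{cor:regular}, by producing a forgetful target in which the marking on $\Sigma'$ \emph{is} $H$-invariant.

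First I would set $w:=h^{-1}(u)\setminus R(h)$. Both $h^{-1}(u)$ and the ramification locus $R(h)$ are $H$-invariant (since $h\circ\sigma=h$ for $\sigma\in H$, and deck transformations permute ramification points), so $w$ is $H$-invariant. The condition $h^{-1}(u)=v\cup R(h)$ forces $w\subseteq v$ and still gives $h^{-1}(u)=w\cup R(h)$, so $(h,u,w)$ satisfies \Cref{eq:cond} and induces a regular covering construction
\[ f'':\Tt(\Sigma,u)\longrightarrow \Tt(\Sigma',w) \]
with $H$-invariant marking on $\Sigma'$. By \Cref{cor:regular}, $f''(\Tt(\Sigma,u))$ is Weil--Petersson in $\Tt(\Sigma',w)$.

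Next I would introduce the forgetful map $\mathcal G:\Tt(\Sigma',v)\to\Tt(\Sigma',w)$, which forgets only the ramification points in $v\setminus w\subseteq R(h)$; by construction $\mathcal G\circ f=f''$. Since $f$ and $f''$ are isometric embeddings for the Teichm\"uller metric (\Cref{def:covering}), they are in particular injective holomorphic immersions, and the chain-rule identity $d\mathcal G\circ df=df''$ shows that $\mathcal G$ restricts to an injective holomorphic immersion from $f(\Tt(\Sigma,u))$ onto $f''(\Tt(\Sigma,u))$. Since these two submanifolds have the same complex dimension, this restriction is a biholomorphism onto its image, which verifies conditions (1) and (2) of \Cref{def:almost_totally_geodesic}.

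The hard part will be checking the dimension condition $3g(\Sigma')-3+|w|>0$ that \Cref{def:almost_totally_geodesic} imposes on the target of a forgetful map; this is where the hypothesis $|u|\geq 5$ in the $(0,1)$ case enters. When $g(\Sigma')\geq 2$ it is automatic. When $g(\Sigma')=1$, Riemann--Hurwitz forces $g(\Sigma)=0$ and constrains the branch locus $B\subseteq u$ to satisfy $|B|\in\{3,4\}$, so $|u|\geq 5$ ensures $u\setminus B\neq\emptyset$ and hence $|w|=\deg(h)\cdot|u\setminus B|\geq 1$. When $g(\Sigma')=0$ one must have $g(\Sigma)=0$ and the classification of finite rotation subgroups of $S^2$ yields $|B|\leq 3$, so the baseline assumption $|u|\geq 4$ together with a short case check on $\deg(h)$ already gives $|w|\geq 4$. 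The $(0,1)$ case is the tight one, which is precisely why the sharper hypothesis $|u|\geq 5$ is needed there.
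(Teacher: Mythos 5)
Your proposal follows essentially the same route as the paper: pass to the $H$-invariant set $w=v\setminus R(h)$ of non-ramification preimages of $u$, apply \Cref{cor:regular} to the resulting regular covering construction into $\Tt(\Sigma',w)$, and note that the forgetful map, which forgets only ramification points, restricts to a biholomorphism of $f(\Tt(\Sigma,u))$ onto its image, the only real work being the bound $3g(\Sigma')-3+|w|>0$. One slip in your case analysis: when $g(\Sigma')=1$ you assert that Riemann--Hurwitz forces $g(\Sigma)=0$, but $g(\Sigma)=1$ is also possible (an unramified cover of a torus by a torus); the paper handles this case separately, and it is harmless since then $R(h)=\emptyset$, $w=h^{-1}(u)$ and $|w|\geq|u|\geq 1$. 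Apart from that omission, your counts for $(g(\Sigma),g(\Sigma'))=(0,1)$ (at most $4$ branch points, so $|u|\geq 5$ suffices) and $(0,0)$ (at most $3$ branch points, with $\deg(h)\geq 4$ when there are exactly $3$) are exactly the facts the paper quotes from Miranda, so the argument is correct once the $(1,1)$ case is added.
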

\begin{proof}
  Let $v' = v \setminus R(h) = h^{-1}(u \setminus B(h))$,
  for $B(h)$ the branch locus of $h$.
  Then, $v'$ is $H$-invariant. We claim that $g(\Sigma') -3 + |v'| > 0$.
  This is immediate if $g(\Sigma') > 1$ so assume otherwise.

  We proceed by cases. If $g(\Sigma) =1$ the map $h$ is actually
  unramified and $|v'| = |u| \geq 1$.
  For the remaining cases we use the following facts from
  ~\cite{Miranda}*{Lemma 3.8 and discussion thereafter}:
  \begin{enumerate}
    \item The map $h$ can have at most $4$ branch points
      and less than $4$ if $g(\Sigma') = 0$.
    \item If $g(\Sigma') = 0$ and $h$ has $3$ branch points,
      then $\deg(h) \geq 4$.
  \end{enumerate}
  The claim follows. The forgetful map
\[ \mathcal F: \Tt(\Sigma',v) \to \Tt(\Sigma',v') \]
satisfies that $M'=\mathcal F(f(\Tt(\Sigma,u))$ is Weil--Petersson (\Cref{cor:regular}).
Moreover, as $\mathcal F$ only forgets ramification points,
$\mathcal F$ restricts to a biholomorphism between $f(\Tt(\Sigma,u))$ and $M'$. The result
follows.
\end{proof}


Combining \Cref{cor:regular,cor:metric}, we complete the proof of \Cref{lemma:almost_geodesic} by showing
the following.
\begin{proposition}\label{prop:totally_marked}
  Let $f:\Tt(\Sigma,u) \to \Tt(\Sigma,v)$ be a totally marked covering construction
  induced by a branched cover $h$. Then, $f(\Tt(\Sigma,u))$ is a Weil--Petersson
  complex submanifold.
\end{proposition}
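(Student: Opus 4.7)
The plan is to reduce the general totally marked case to the regular case (handled by \Cref{cor:regular}) via a Galois closure construction, and then transfer Weil--Petersson total geodesicity back down one level using the fact, from \Cref{cor:metric}, that totally marked covering constructions are Weil--Petersson homotheties. Concretely, I would first form the Galois closure $\tilde h : \tilde\Sigma \to \Sigma$ of the branched cover $h : \Sigma' \to \Sigma$: off the branch locus $h$ corresponds to a finite-index subgroup $K \leq \pi_1(\Sigma \setminus B(h))$, the Galois closure corresponds to the intersection of all $\pi_1(\Sigma \setminus B(h))$-conjugates of $K$, and extending the resulting unramified cover over the branch points produces a regular branched cover $\tilde h$ which factors as
\[ \tilde\Sigma \xrightarrow{h'} \Sigma' \xrightarrow{h} \Sigma, \]
with the intermediate map $h'$ itself a regular branched cover. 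Setting $\tilde v := \tilde h^{-1}(u) = (h')^{-1}(v)$ makes both $\tilde h$ and $h'$ totally marked, and functoriality of pullback of complex structures gives $f_{\tilde h} = f_{h'} \circ f_h$.

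Applying \Cref{cor:regular} to both $\tilde h$ and $h'$ I would conclude that $f_{\tilde h}(\Tt(\Sigma,u))$ and $f_{h'}(\Tt(\Sigma', v))$ are both Weil--Petersson submanifolds of $\Tt(\tilde\Sigma, \tilde v)$; the required invariance of $\tilde v$ under the relevant deck groups is automatic because $\tilde v$ is the full preimage of the base marked sets. Since $f_{\tilde h}(\Tt(\Sigma,u)) = f_{h'}(f_h(\Tt(\Sigma,u)))$ is contained in $f_{h'}(\Tt(\Sigma', v))$ and both are Weil--Petersson in the ambient Teichm\"uller space, the former is Weil--Petersson inside the latter equipped with the restricted Weil--Petersson metric.

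To descend to $\Tt(\Sigma',v)$, I would invoke \Cref{cor:metric}: the biholomorphism $f_{h'} : \Tt(\Sigma', v) \to f_{h'}(\Tt(\Sigma', v))$ satisfies $f_{h'}^* g_{\WP} = \deg(h')\, g_{\WP}$, so it is a homothety onto its image and therefore preserves the class of Weil--Petersson geodesics in both directions. Pulling $f_{\tilde h}(\Tt(\Sigma,u))$ back through $f_{h'}$ yields $f_h(\Tt(\Sigma,u))$, which is consequently Weil--Petersson in $\Tt(\Sigma', v)$. The step demanding the most care is the topological construction of the Galois closure as a branched cover with the required factorization through $h$; once that is in place, the remainder is a routine assembly of \Cref{cor:regular}, \Cref{cor:metric}, and the elementary Riemannian fact that homotheties between Riemannian manifolds send totally geodesic submanifolds to totally geodesic submanifolds.
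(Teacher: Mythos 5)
Your proposal is correct and is essentially the paper's own argument: the paper likewise passes to the normal (Galois) closure $\Sigma''\to\Sigma'\to\Sigma$ with all preimages of $u$ marked, applies \Cref{cor:regular} to both regular totally marked covers to get two nested Weil--Petersson submanifolds of $\Tt(\Sigma'',w)$, and then descends through the intermediate totally marked covering construction using the homothety property of \Cref{cor:metric}. No substantive difference.
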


\begin{proof}
  The case of totally marked regular coverings follows from \Cref{cor:regular}.
  So assume that $h$ is not regular. Let
  \[ h_2:(\Sigma'',w) \to (\Sigma',v) \]
  be the \emph{normal closure} of $h$, i.e. the regular cover induced by
  taking a finite index subgroup $\Gamma < h_*(\pi_1(\Sigma'-v)) < \pi_1(\Sigma - u))$ such that $\Gamma \trianglelefteq \pi_1(\Sigma -u)$.
  This means that have a commutative diagram:
\begin{center}
  \begin{tikzcd}[sep =small]
    (\Sigma'',w) \ar[dd,"h_1",swap] \ar[rd,"h_2"]&\\
    & (\Sigma',v) \ar[ld, "h"]\\
    (\Sigma,u)&
  \end{tikzcd}
\end{center}
where $h_i$ for $i=1,2$ are regular covers and the marked points satisfy
  \[ h_1^{-1}(u) = h_2^{-1}(v) = w .\]
  By \Cref{cor:regular}, the induced maps
  \begin{center}
  \begin{tikzcd}[sep = small]
    \Tt(\Sigma'',w) &\\
    & \Tt(\Sigma',v) \ar[ul, "f_2",swap]\\
    \Tt(\Sigma,u)\ar[uu, "f_1"] \ar[ur, "f",swap]&
  \end{tikzcd}
\end{center}
satisfy that $f_1(\Tt(\Sigma,u))$ and $f_2(\Tt(\Sigma',v))$ are Weil--Petersson submanifolds of $\Tt(\Sigma'', w)$.
Since $f_1 = f_2 \circ f$, it follows that $f_1(\Tt(\Sigma,u)) \subset f_2(\Tt(\Sigma',v))$ and
\[ f(\Tt(\Sigma,u)) = f_2^{-1}(f_1(\Tt(\Sigma,u))) .\]
In particular, $f_1(\Tt(\Sigma,u))$ is a totally geodesic submanifold of $f_2(\Tt(\Sigma',v))$ with
respect to $g_{\WP}$.
  The map $f_2:\Tt(\Sigma',v) \to \Tt(\Sigma'', w)$ is a totally marked
  covering construction. Thus, by \Cref{cor:metric}
  it is (up to a constant)
  a Riemannian isometric embedding for $g_{\WP}$. The claim follows.
\end{proof}




\bibliography{biblio}
\end{document}